\numberwithin{equation}{section}
\newtheorem{definition}{Definition}[section]
\newtheorem{theorem}{Theorem}[section]
\newtheorem{lemma}{Lemma}[section]
\newtheorem{remark}{Remark}[section]
\newtheorem{example}{Example}[section]
\title{Slow Manifolds for Multi-Time-Scale\\ Stochastic  Evolutionary Systems\thanks{This work  was
 supported by the NSFC grants (10901065, 10971225, 11028102), the Chinese
Universities' Basic Research Fund, the NSF Grant 1025422,   the open
funding of LNM at the Institute of Mechanics of Chinese Academy of
Science, the Fundamental Research Funds for the Central Universities
(HUST£ºNo.2010ZD037 and N0. 2011QNQ170), the Research Fund for the
Doctoral Program of Higher Education of China (20110142120036).}}
\author{Hongbo Fu$^1,$
 Xianming Liu$^2,$
Jinqiao Duan$^3$
 \\
1. College of Mathematics and Computer Science\\
Wuhan Textile University\\ Wuhan 430073, China \\ \emph{E-mail: hbfuhust@gmail.com}\\
  2. Department of Applied Mathematics\\ Illinois Institute of Technology \\
  Chicago, IL 60616, USA  \\\emph{E-mail: duan@iit.edu}\\
  3. School of Mathematics and Statistics\\ Huazhong University of Science and Technology\\
   Wuhan 430074, China \\ \emph{mathliuxm@yahoo.cn}}
\begin{document}

\maketitle

\pagestyle{plain}

\begin{abstract}
This article deals with invariant manifolds for infinite dimensional
random dynamical systems   with different time scales.
  Such a random system is generated by a coupled system of fast-slow
  stochastic evolutionary equations. Under suitable conditions, it is proved that
  an exponentially tracking random invariant manifold exists,
eliminating the fast motion for this coupled system. It is further
shown that if the scaling parameter tends to zero, the invariant
manifold tends to a \emph{slow manifold} which captures long time
dynamics.  As examples  the   results are applied to a few systems
of coupled parabolic-hyperbolic partial differential equations,
coupled parabolic partial differential-ordinary differential
equations, and coupled hyperbolic-hyperbolic partial differential
equations.
\medskip

 {\bf Key Words:} Stochastic partial differential equations (SPDEs); random dynamical systems; multiscale systems;  random invariant manifolds; slow manifolds; exponential tracking property
\end{abstract}

{\bf 2000 AMS Subject Classification:} 60H15; 37L55; 37L25; 37H10,
37D10, 70K70.

\section{Introduction}

The theory of invariant manifolds serves as a tool for analyzing
dynamical behaviors of deterministic
  systems. It was first introduced by Hadamard
\cite{Hadamard}, then  by Lyapunov \cite{Lyapunov} and Perron
\cite{Perron} for deterministic systems.  It has been further
developed by many authors for infinite dimensional deterministic
systems; see, e.g., \cite{Ruelle, Bates, Chicone, Chow, Henry}. More
recently, invariant manifolds have been investigated for infinite
dimensional stochastic systems; see \cite{Duan1, Duan2, Lu,
Mohammed, Bensoussan, Caraballo, Chueshov, Chueshov-0} among others.

Some   systems   evolve on fast and slow time scales, and may thus
be modeled by  coupled singularly perturbed stochastic ordinary or
partial differential equations (SDEs or SPDEs).
For SDEs with two time scales, Schmalfu{\ss} and Schneider \cite
{Schmalfu-Schneider} have recently investigated  random inertial
manifolds that eliminate the fast variables,  by a fixed point
technique based on a random graph transformation. They show that the
inertial manifold tends to another so-called slow manifold as the
scaling parameter goes to zero. Qualitative analysis for the
behavior of the slow manifold for slow-fast SDEs on the long time
scales can be found in Wang and Roberts \cite {wangwei}.

In the present paper,  we consider invariant manifolds for
stochastic fast-slow systems in infinite dimension. Namely we
investigate the following system of fast-slow stochastic
evolutionary equations, which could be coupled SPDEs, or   coupled
SPDEs-SDEs,
\begin{eqnarray*}\label{Fast-Equation-Stoch000}
&&\dot{x}^\epsilon=\frac{1}{\epsilon}A{x}^\epsilon+\frac{1}{\epsilon}f(x^\epsilon,
y^\epsilon)+\frac{\sigma}{\sqrt{\epsilon}}\dot{w},  \quad in \;\; H_1,\\
\label{Slow-Equation-Stoch000}
&&\dot{y}^\epsilon=B{y}^\epsilon+g(x^\epsilon, y^\epsilon),\quad in
\;\;H_2,
\end{eqnarray*}
where $A$ and $B$ are  generators of $C_0-$semigroups, the
interaction functions $f$ and $g$ are continuous. The noise process
$w=\sum\limits_{j=1}^mh_jw_j,$  where $\{w_j\}_{j=1}^m$ are
two-sided Wiener processes (or Brownian motions) taking values in
$\mathbb{R}$ and  $h_j\; (1\leq j\le m)$ are given elements in
$H_1$. The small parameter $\epsilon>0$ representing the ratio of
the two time scales. The precise conditions on these quantities will
be given in Section \ref{Framework}, which allow our framework to
deal with multiscale coupled parabolic-hyperbolic systems and
coupled hyperbolic-hyperbolic systems.


It is worthy mentioning that in the situation we consider here, the
noise perturbation of the fast motion equation is  additive type.
The reason is that the problem for existence of random dynamical
systems generated by stochastic partial differential equations with
general multiplicative noise is still unsolved (for details see
\cite{Duan1}). The main goal in this paper is to establish, for
$\epsilon>0$ small enough, the existence of a random invariant
manifold $M^\epsilon$ with an exponential tracking property for the
above stochastic system. Thus as a consequence, this system can be
reduced to an evolutionary equation with a modified nonlinear term,
which is useful for describing the long time behavior of the
original coupled stochastic system. There are usually two approaches
to construction of invariant manifolds: Hadamard graph transform
method (see \cite{Schmalfuss, Duan1}) and Lyapunov-Perron method
(see \cite{Chow-2, Duan2, Caraballo}). We achieve our results by the
latter which is different from the method of random graph
transformation in \cite{Schmalfu-Schneider}. In this approach one
key assumption is that the   Lipschitz constant of the nonlinear
term in fast component is small enough comparing with the decay rate
of the linear operator $A$. In particular, under suitable conditions
it is further shown that this manifold $M^\epsilon$ can be
asymptotically approximated for $\epsilon$ sufficiently small by a
\emph{slow manifold} $M^0$ for a reduced stochastic system. We note
that, in the case of Lyapunov-Perron method applied to a coupled
stochastic systems, the existence of an random invariant manifold
for the coupled stochastic parabolic-hyperbolic equations, that do
not contain two widely separated characteristic timescales, is
obtained by Caraballo Chueshov and Langa in \cite{Caraballo}. We
remark that, whereas the existence of \emph{slow manifold} is not
studied, in their paper the just mentioned authors also verify that
this random manifold converges to its deterministic counterpart when
the intensity of noise tends to zero.


This paper is organized as follows. In Section 2, some basic
concepts in random dynamical systems and random invariant manifolds
are recalled. Our framework is presented in Section 3. In Section 4,
we establish the existence of a random invariant manifold
$M^\epsilon$ possessing an exponential tracking property, and then
in Section 5 we show $M^\epsilon$ converges to a slow manifold $M^0$
with rate of order 1. Section 6 is devoted to a few illustrative
examples. Remarks on local manifolds for systems with local
Lipschitz nonlinearities are given in Section 7.

\section{Preliminaries on random dynamical systems}  \label{RDS}

We now recall basic concepts in random dynamical systems (RDS) and
random invariant manifolds (RIM). For more details, see
\cite{Arnold, Duan1, Duan2}.

\begin{definition}
Let $(\Omega, \mathscr{F}, \mathbb{P})$ be a probability space and
$\theta=\{\theta_t\}_{t\in \mathbb{R}}$ be a flow on $\Omega$ which
is defined as a mapping
\begin{equation*}
\theta: \mathbb{R}\times \Omega \mapsto \Omega
\end{equation*}
and satisfies

$\bullet$  $\theta_0=id_\Omega$,

$\bullet$ $\theta_s\theta_t=\theta_{s+t}$ for all $s, t \in
\mathbb{R},$

 $\bullet$  the mapping $(t,
\omega)\mapsto\theta_t\omega$ is $(\mathscr{B}(\mathbb{R}) \times
\mathscr{F}, \mathscr{F})-$measurable and
$\theta_t\mathbb{P}=\mathbb{P}$ for all $t\in\mathbb{R}.$ Then the
quadruple $(\Omega, \mathcal{F}, \mathbb{P}, \theta)$ is called a
driving dynamical system.
\end{definition}

We will work on the driving dynamical system represented by Wiener
process. To be more precise, let $\Omega=C_0(\mathbb{R},
\mathbb{R}^m$) be the continuous paths $\omega(t)$ on $\mathbb{R}$
with values $\mathbb{R}^m$ such that $\omega(0)=0$. This set is
equipped with the compact-open topology. Let $\mathcal{F}$ be the
associated Borel $\sigma-$field and $\mathbb{P}$ be the Wiener
measure. Then we identify $\omega$ with
$$\big(w_1(t), w_2(t), \cdots, w_m(t)\big)=\omega(t),\;t\in\mathbb{R}.$$
The operators $\theta_t$ forming the flow are given by the Wiener
shift:
$$\theta_t\omega(\cdot)=\omega(\cdot+t)-\omega(t), \;\omega\in \Omega, \;t\in \mathbb{R}.$$
Note that the measure $\mathbb{P}$ is invariant with respect to the
above flow and then the quadruple $(\Omega, \mathcal{F}, \mathbb{P},
\theta)$ is  a driving dynamical system.

\medskip

\begin{definition}
Let $(\mathbb{H}, d_\mathbb{H})$ be a metric space with Borel
$\sigma-$field $\mathscr{B}(\mathbb{H})$. A cocycle is a mapping:
$$\phi:\mathbb{R}^+\times \Omega\times \mathbb{H}\mapsto \mathbb{H},$$
which is
$\left(\mathscr{B}(\mathbb{R}^+)\times\mathscr{F}\times\mathscr{B}(\mathbb{H}),
\mathscr{B}(\mathbb{H})
 \right)-$measurable such that
\begin{eqnarray*}
&&\phi(0, \omega, x)=x,\\
&&\phi(t+s, \omega, x)=\phi(t, \theta_s\omega, \phi(s, \omega, x)),
\end{eqnarray*}
for $t, s \in \mathbb{R}^+, \omega\in \Omega$, and $x\in
\mathbb{H}$. Then $\phi$ together with the driving   system $\theta$
forms a random dynamical system (RDS).
\end{definition}

A RDS is called continuous (differentiable) if $x\rightarrow\phi(t,
\omega, x)$ is continuous (differentiable) for $t\geq 0$ and $
\omega\in\Omega.$ A family of nonempty closed sets $M=\{M(\omega)\}$
contained in a metric space $(\mathbb{H}, \|\cdot\|_\mathbb{H})$ is
called a \emph{random set} if for every $y\in \mathbb{H}$ the
mapping
$$\omega\rightarrow \inf\limits_{x\in M(\omega)}\|x-y\|_\mathbb{H}$$
is a random variable. Now we introduce the random invariant manifold
concept.

\medskip

\begin{definition}
A random set $M(\omega)$ is called a positively invariant set if
$$\phi(t, \omega, M(\omega))\subset M(\theta_t\omega), \;for\; t\geq 0, \; \omega\in \Omega.$$
If $M$ can be represented as a graph of a Lipschitz mapping
$$\psi(\cdot, \omega): {H}_1\rightarrow {H}_2, \;\; \mathbb{H}={H}_1\times {H}_2$$
such that
$$M(\omega)=\{(x_1, \psi(x_1, \omega)): x_1\in {H}_1\},$$
then $M(\omega)$ is called a Lipschitz random invariant manifold.
If, in addition, for every $x\in \mathbb{H}$, there exists an $x'\in
M(\omega)$ such that for all $\omega\in \Omega$,
$$\|\phi(t, \omega, x)-\phi(t, \omega, x')\|_\mathbb{H}\leq c_1(x, x', \omega)e^{-c_2t}\|x-x'\|_\mathbb{H},\; t\geq 0,$$
 where
  $c_1$ is a positive random variable depending on $x$ and $x'$, while $c_2$ is a positive constant,
then $M(\omega)$ is said to have an exponential tracking property.
\end{definition}

\section{Framework}\label{Framework}

Consider the following system of stochastic evolutionary equations
with two time scales
\begin{eqnarray}\label{Fast-Equation-Stoch}
&&\dot{x}^\epsilon=\frac{1}{\epsilon}A{x}^\epsilon+\frac{1}{\epsilon}f(x^\epsilon,
y^\epsilon)+\frac{\sigma}{\sqrt{\epsilon}}\dot{w},  \quad in \; H_1,\\
\label{Slow-Equation-Stoch}
&&\dot{y}^\epsilon=B{y}^\epsilon+g(x^\epsilon, y^\epsilon),\quad in
\;H_2,
\end{eqnarray}
where $A$ is a generator of a $C_0$-semigroups on separable Hilbert
space $H_1$, and $B$ is a generator of a $C_0$-groups on separable
Hilbert $H_2$. Nonlinearities $f$ and $g$ are continuous functions,
\begin{equation*}
f: H_1\times H_2\mapsto H_1, \quad g: H_1\times H_2\mapsto H_2,
\end{equation*}
with $f(0,0)=g(0,0)=0$.  The noise process
$w=\sum\limits_{j=1}^mh_jw_j$, where $\{w_j\}_{j=1}^m$ are two-sided
Wiener processes taking values in $\mathbb{R}$ and $h_j\; (1\leq
j\le m)$ are given elements in $H_1$. Moreover, $\sigma$ is a
nonzero constant (noise intensity),  and $\epsilon$ is a small
positive parameter representing the ratio of time scales in this
fast-slow system. In this setting, $x^\epsilon$ is referred as the
``fast'' component while $y^\epsilon$ is the ``slow'' component.

Denote by $\|\cdot\|_1$ and $\|\cdot\|_2$ the norms in $H_1$ and
$H_2$, respectively. The norm in $\mathbb{H}=H_1 \times H_2$ is
denoted as $\|\cdot \|$. For the linear operators $A$ and $B$ we
assume the following conditions.

\medskip

(\textbf{A1}) Let $A$ be the generator of a $C_0-$semigroup $e^{At}$
on $H_1$   satisfying
\begin{equation*}
\|e^{At}x\|_1\leq  e^{-\gamma_1t}\|x\|_1, \quad t\geq 0.
\end{equation*}
for all  $x\in H_1$, with a   constant (i.e., decay rate)
$\gamma_1>0$. Moreover, $B$ is the generator of a $C_0-$group
$e^{Bt}$ on $H_2$ satisfying
\begin{equation*}
\|e^{Bt}y\|_2\leq  e^{-\gamma_2t}\|y\|_2, \quad t\leq 0.
\end{equation*}
for all $y\in H_2$, with a constant $\gamma_2\geq 0$.


We also make the following two more assumptions.

(\textbf{A2}) Lipschitz condition: There exists a positive constant
$K$ such that for all $(x_i, y_i) \in H_1 \times H_2$
\begin{equation*}
\|f(x_1, y_1)-f(x_2, y_2)\|_1\leq K(\|x_1-x_2\|_1+\|y_1-y_2\|_2),
\end{equation*}
and
\begin{equation*}
\|g(x_1, y_1)-g(x_2, y_2)\|_2\leq K(\|x_1-x_2\|_1+\|y_1-y_2\|_2).
\end{equation*}


(\textbf{A3}) Assume that the Lipschitz constant $K$ of the
nonlinear terms in system
\eqref{Fast-Equation-Stoch}--\eqref{Slow-Equation-Stoch} is smaller
than the decay rate $\gamma_1$ of $A$, that is,
\begin{equation*}
K< \gamma_1.
\end{equation*}

\begin{remark}
We note that the system
\eqref{Fast-Equation-Stoch}--\eqref{Slow-Equation-Stoch} is an
abstract model for various complex systems under random influences,
which can be a finite-dimensional, stochastic slow-fast system
analysed in \cite{Schmalfu-Schneider, wangwei}.

\end{remark}

Now as in \cite{Duan1}, we verify that the stochastic evolutionary
system \eqref{Fast-Equation-Stoch}--\eqref{Slow-Equation-Stoch} can
be transformed into a random evolutionary system which generates a
RDS. For this purpose, let $\eta^{\frac{1}{\epsilon}}$ be a
stationary solution of the linear stochastic evolutionary equation
\begin{equation}\label{Linear-equation-scale}
d\eta^\frac{1}{\epsilon}(t)=\frac{1}{\epsilon}A
\eta^\frac{1}{\epsilon}(t)dt+\frac{\sigma}{\sqrt{\epsilon}}dw(t).
\end{equation}
This means that   the random variable $\eta^{\frac{1}{\epsilon}}$
with values in $H_1$ is defined on a $\{\theta_t\}_{t\in
\mathbb{R}}-$invariant set of full measure such that
$$t  \rightarrow  \eta^{\frac{1}{\epsilon}}(\theta_t\omega)$$
is a solution version for \eqref{Linear-equation-scale}. Let $\xi$
be the stationary solution of the linear stochastic evolutionary
equation
\begin{equation*}
d\xi(t)=A\xi(t)+\sigma dw(t).
\end{equation*}
Then by the scale property of Wiener process,
$\eta^\frac{1}{\epsilon}(\theta_t\omega)$ has the same distribution
of $\xi(\theta_{\frac{t}{\epsilon}}\omega),$ see the Lemma 3.2 in
\cite{Schmalfu-Schneider}. For the existence of stationary solutions
to stochastic evolutionary equations see
  \cite{Caraballo0}.

Define $ X^\epsilon =x^\epsilon-
\eta^{\frac{1}{\epsilon}}(\theta_t\omega) $ and
 $ Y^\epsilon =y^\epsilon $. Then the original evolutionary system
\eqref{Fast-Equation-Stoch}--\eqref{Slow-Equation-Stoch} is
converted to the following random evolutionary system
\begin{eqnarray}\label{Fast-Equation-Random}
&&dX^\epsilon=\frac{1}{\epsilon}A X^\epsilon
dt+\frac{1}{\epsilon}F(X^\epsilon, Y^\epsilon,
\theta_t^\epsilon\omega)dt,\\
\label{Slow-Equation-Random}&&dY^\epsilon=BY^\epsilon
dt+G(X^\epsilon, Y^\epsilon, \theta_t^\epsilon\omega)dt,
\end{eqnarray}
where
\begin{eqnarray*}
&&F(X^\epsilon, Y^\epsilon,
\theta_t^\epsilon\omega)=f(X^\epsilon+\eta^\frac{1}{\epsilon}(\theta_t\omega),
Y^\epsilon),\\
&&G(X^\epsilon, Y^\epsilon,
\theta_t^\epsilon\omega)=g(X^\epsilon+\eta^\frac{1}{\epsilon}(\theta_t\omega),
Y^\epsilon).
\end{eqnarray*}

Let $Z^\epsilon(t, \omega, Z_0)=\big(X^\epsilon(t, \omega, X_0,
Y_0), Y^\epsilon(t, \omega, X_0, Y_0)\big)$ be the solution of
\eqref{Fast-Equation-Random}--\eqref{Slow-Equation-Random} with
initial data $\big(X^\epsilon(0), Y^\epsilon(0)\big)=(X_0,
Y_0):=Z_0.$  Then the solution operator of
\eqref{Fast-Equation-Random}--\eqref{Slow-Equation-Random}
 $$\Phi^\epsilon\big(t, \omega, (X_0, Y_0)\big)=\big(X^\epsilon(t,
\omega, X_0, Y_0), Y^\epsilon(t, \omega, X_0, Y_0)\big)$$ defines a
random dynamical system \cite{Duan1}.   Furthermore
$$\phi^\epsilon(t, \omega):=\Phi^\epsilon(t, \omega)+(\eta^\frac{1}{\epsilon}(\theta_t\omega), 0),\; t\geq 0, \;\; \omega\in \Omega$$
is the random dynamical system generated by the original system
\eqref{Fast-Equation-Stoch}--\eqref{Slow-Equation-Stoch}.

 We introduce some notations. Let $\mu$ be a positive number satisfying
\begin{equation}\label{Condition-mu}
\gamma_1-\mu>K.
\end{equation}
For any $\alpha\in \mathbb{R}$, define  Banach spaces
$$C_{\alpha}^{i,-}=\left\{\varphi:(-\infty,
0]\mapsto H_i \;\; is\; \;\;continuous \;\; and \;\;
\sup\limits_{t\leq 0}\|e^{-\alpha t}\varphi(t)\|_i< \infty\right\}$$
with the norm $\|\varphi\|_{C_{\alpha}^{i,-}}=\sup\limits_{t\leq
0}\|e^{-\alpha t}\varphi(t)\|_i$ for $i= 1, 2.$ Similarly, we define
Banach spaces
$$C_{\alpha}^{i,+}=\left\{\varphi:[0, \infty,
)\mapsto H_i \;\; is \;\;\; continuous \;\; and \;\;
\sup\limits_{t\geq 0}\|e^{-\alpha t}\varphi(t)\|_i< \infty\right\}$$
with the norm $\|\varphi\|_{C_{\alpha}^{i,+}}=\sup\limits_{t\geq
0}\|e^{-\alpha t}\varphi(t)\|_i$ for $i= 1, 2.$ Let
$C_{\alpha}^{\pm}$ be the product Banach spaces
$C_{\alpha}^{\pm}:=C_{\alpha}^{1,\pm}\times C_{\alpha}^{2,\pm} $,
with the norm
$$\|z\|_{C_{\alpha}^{\pm}}=\|x\|_{C_{\alpha}^{1,\pm}}+\|y\|_{C_{\alpha}^{2,\pm}}, \; z=(x,y)\in C_{\alpha}^{\pm}.$$

\section{Slow manifolds}

In this section, we   prove the existence of a Lipschitz continuous
invariant manifolds $M^\epsilon(\omega)$, with
  an \emph{exponential tracking property},  for the random
evolutionary system
\eqref{Fast-Equation-Random}--\eqref{Slow-Equation-Random}.

Define
$$M^\epsilon(\omega) \triangleq \left\{Z_0\in
\mathbb{H}: \; Z^\epsilon(\cdot,\omega, Z_0)\in
C_{-\frac{\mu}{\epsilon}}^{-}\right\}.
$$
This is the set of all initial data through which solutions are
bounded by $e^{-\frac{\mu}{\epsilon}t}$. We shall use
Lyapunov-Perron method to prove that $M^\epsilon(\omega)$ is an
invariant manifold described by the graph of a Lipschitz function.
For this we will need the following properties of the random
function $Z^\epsilon(\cdot,\omega, Z_0)$ (see \cite{Duan2}).

\begin{lemma}\label{If and only if}
Suppose that $Z^\epsilon(\cdot, \omega)=\big(X^\epsilon(\cdot,
\omega), Y^\epsilon(\cdot, \omega)\big)$ is in
$C_{-\frac{\mu}{\epsilon}}^{-}$. Then $Z^\epsilon(t, \omega)$ is the
solutions of
\eqref{Fast-Equation-Random}--\eqref{Slow-Equation-Random} with
initial data $Z_0=(X_0, Y_0)$ if and only if $Z^\epsilon(\cdot,
\omega)$ satisfies
\begin{equation*}
\left(
\begin{array}{ccc}
X^\epsilon(t)\\
\\
Y^\epsilon(t)
\end{array}
\right)= \left(
\begin{array}{ccc}
\frac{1}{\epsilon}\int_{-\infty}^te^{\frac{A(t-s)}{\epsilon}}F(X^\epsilon(s), Y^\epsilon(s), \theta_s^\epsilon\omega)ds\\
\\
e^{Bt}Y_0+\int_0^te^{B(t-s)}G(X^\epsilon(s), Y^\epsilon(s),
\theta_s^\epsilon\omega)ds
\end{array}
\right).
\end{equation*}
\end{lemma}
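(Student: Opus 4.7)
The plan is to prove the lemma by the classical variation-of-constants (Duhamel) argument, with the fast-component representation reconstructed from the decay at $-\infty$ rather than from a prescribed datum at $t=0$. The key ingredients are the semigroup bound in (\textbf{A1}), the global Lipschitz estimate (\textbf{A2}) together with $f(0,0)=g(0,0)=0$ (which gives $F$ sublinear growth once the shifted noise is accounted for), and the spectral-gap condition \eqref{Condition-mu} requiring $\gamma_1-\mu>K$.

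For the forward direction, assume $(X^\epsilon,Y^\epsilon)\in C_{-\mu/\epsilon}^{-}$ is a mild solution of \eqref{Fast-Equation-Random}--\eqref{Slow-Equation-Random} with initial datum $(X_0,Y_0)$. For the slow component the ordinary mild formula on $[0,t]$ produces the second identity directly, using that $e^{Bt}$ is a $C_0$-group. For the fast component I would write the mild formula on $[\tau,t]$ with $\tau<0$,
\[
X^\epsilon(t)=e^{A(t-\tau)/\epsilon}X^\epsilon(\tau)+\frac{1}{\epsilon}\int_\tau^{t} e^{A(t-s)/\epsilon}F(X^\epsilon(s),Y^\epsilon(s),\theta_s^\epsilon\omega)\,ds,
\]
and pass to the limit $\tau\to-\infty$. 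The boundary term satisfies $\|e^{A(t-\tau)/\epsilon}X^\epsilon(\tau)\|_1\le e^{-\gamma_1(t-\tau)/\epsilon}\cdot C e^{-\mu\tau/\epsilon}=C e^{-\gamma_1 t/\epsilon}e^{(\gamma_1-\mu)\tau/\epsilon}\to 0$ because $\gamma_1>\mu$. The same bound applied inside the integrand, combined with the Lipschitz control of $F$ and the tempered growth of $\eta^{1/\epsilon}(\theta_s\omega)$, delivers absolute convergence of the improper integral on $(-\infty,t]$, producing the first identity.

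For the reverse direction, start from the two integral equations. The slow identity gives $Y^\epsilon(0)=Y_0$ and, upon differentiation in the mild sense, reproduces \eqref{Slow-Equation-Random}. For the fast identity, splitting $\int_{-\infty}^{t}=\int_{-\infty}^{\tau}+\int_\tau^{t}$ and applying the semigroup law recovers the mild formulation of \eqref{Fast-Equation-Random} on $[\tau,t]$ for every $\tau\le t$, so $X^\epsilon$ is a genuine mild solution; evaluating at $t=0$ identifies its initial value with $X_0$ by hypothesis.

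The only real obstacle is concentrated in a single technical point: showing that the improper integral is well-defined and that the boundary contribution vanishes. Both reduce to the exponential gap $\gamma_1-\mu>0$ encoded in \eqref{Condition-mu} and in the definition of the space $C_{-\mu/\epsilon}^{-}$. A mild subtlety is that $\eta^{1/\epsilon}(\theta_s\omega)$ enters $F$ through its first argument with only tempered (subexponential) growth in $s$, but that growth is routinely absorbed by the factor $e^{(\gamma_1-\mu)s/\epsilon}$ in the standard fashion for random dynamical systems driven by stationary Ornstein--Uhlenbeck processes.
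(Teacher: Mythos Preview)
Your argument is the standard one and is correct. The key observation---that for $Z^\epsilon\in C^{-}_{-\mu/\epsilon}$ the boundary term $e^{A(t-\tau)/\epsilon}X^\epsilon(\tau)$ vanishes as $\tau\to-\infty$ precisely because $\gamma_1>\mu$---is exactly the mechanism that makes the improper integral representation of the fast component possible, and your treatment of both directions via the mild/Duhamel formulation is the expected route.

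As for comparison with the paper: the paper does \emph{not} supply a proof of this lemma. It merely states the result and refers the reader to \cite{Duan2} (Duan--Lu--Schmalfu{\ss}), where the analogous equivalence is established by the same variation-of-constants argument you outline. So your proposal is not a different approach; it is the approach, just written out rather than cited. One minor remark: in the reverse direction your phrase ``evaluating at $t=0$ identifies its initial value with $X_0$ by hypothesis'' is slightly misleading, since $X_0$ does not appear in the integral system---it is \emph{defined} as $X^\epsilon(0)=\frac{1}{\epsilon}\int_{-\infty}^{0}e^{-As/\epsilon}F(\cdots)\,ds$. Also, your care about the tempered growth of $\eta^{1/\epsilon}(\theta_s\omega)$ when bounding the improper integral is warranted and in fact more scrupulous than the paper's later estimates, which silently drop that contribution in the self-map bound for $\mathcal{J}_1^\epsilon$.
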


\begin{theorem}\label{Th-RIM} (Slow manifolds)\\
Assume that (\textbf{A1})--(\textbf{A3}) hold and   that
$\epsilon>0$ is sufficiently small. Then the random dynamical system
defined by
\eqref{Fast-Equation-Random}--\eqref{Slow-Equation-Random} has a
Lipschitz random slow manifold $M^\epsilon(\omega)$ represented as a
graph
$$
M^\epsilon(\omega)=\left\{\big(H^\epsilon(\omega, Y_0), Y_0\big):
Y_0\in H_2\right\},
$$
where
$$
H^\epsilon(\cdot, \cdot): \Omega\times H_2\mapsto H_1
$$
is the graph mapping with Lipschitz constant satisfying
$$
Lip H^\epsilon(\omega,
\cdot)\leq\frac{K}{\left(\gamma_1-\mu\right)\left[1-K\left(\frac{1}{\gamma_1-\mu}
+\frac{\epsilon}{\mu-\epsilon\gamma_2}\right)\right]}, \;\omega\in
\Omega.
$$
\end{theorem}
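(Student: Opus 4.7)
The plan is to apply the Lyapunov--Perron method in the weighted Banach space $C^-_{-\mu/\epsilon}$, building on the integral characterization of Lemma \ref{If and only if}. For each $\omega\in\Omega$ and each $Y_0\in H_2$, I would introduce the Lyapunov--Perron operator $\mathcal{J}^{\epsilon}_{Y_0,\omega}: C^-_{-\mu/\epsilon}\to C^-_{-\mu/\epsilon}$ defined by
\begin{equation*}
\mathcal{J}^{\epsilon}_{Y_0,\omega}(X,Y)(t) := \left(
\begin{array}{l}
\displaystyle\frac{1}{\epsilon}\int_{-\infty}^t e^{A(t-s)/\epsilon} F(X(s),Y(s),\theta_s^\epsilon\omega)\,ds\\[6pt]
\displaystyle e^{Bt}Y_0 + \int_0^t e^{B(t-s)} G(X(s),Y(s),\theta_s^\epsilon\omega)\,ds
\end{array}
\right).
\end{equation*}
By Lemma \ref{If and only if}, a fixed point of $\mathcal{J}^\epsilon_{Y_0,\omega}$ in $C^-_{-\mu/\epsilon}$ is precisely a solution of \eqref{Fast-Equation-Random}--\eqref{Slow-Equation-Random} whose slow component satisfies $Y(0)=Y_0$, and the graph map will then be $H^\epsilon(\omega,Y_0):=X^\epsilon(0;\omega,Y_0)$, where $(X^\epsilon,Y^\epsilon)$ is that fixed point.

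The technical heart is to verify that $\mathcal{J}^\epsilon_{Y_0,\omega}$ is a contraction uniformly in $\omega$ and $Y_0$. For the fast component, (\textbf{A1}), the Lipschitz hypothesis (\textbf{A2}), and the substitution $u=(s-t)/\epsilon$ yield the bound $K/(\gamma_1-\mu)$ on the Lipschitz constant of the fast part, using only $\mu<\gamma_1$. For the slow component, the backward contraction of $e^{Bt}$ together with the positivity of $\mu/\epsilon-\gamma_2$ (valid for all $\epsilon$ with $\epsilon\gamma_2<\mu$) produces the complementary bound $K\epsilon/(\mu-\epsilon\gamma_2)$. Summing these gives the total contraction constant
$$
\kappa(\epsilon):=\frac{K}{\gamma_1-\mu}+\frac{K\epsilon}{\mu-\epsilon\gamma_2},
$$
which is strictly less than $1$: the first term is already $<1$ by \eqref{Condition-mu}, and the second tends to $0$ as $\epsilon\downarrow 0$. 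The Banach fixed point theorem then supplies the unique fixed point.

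To derive the quantitative Lipschitz bound on $H^\epsilon(\omega,\cdot)$, I would compare the fixed points $Z^\epsilon$ and $\tilde Z^\epsilon$ corresponding to two slow data $Y_0, \tilde Y_0$. The extra inhomogeneity is $e^{Bt}(Y_0-\tilde Y_0)$, which satisfies $e^{\mu t/\epsilon}\|e^{Bt}(Y_0-\tilde Y_0)\|_2\le\|Y_0-\tilde Y_0\|_2$ for $t\le 0$, so the very same contraction estimates give $\|Z^\epsilon-\tilde Z^\epsilon\|_{C^-_{-\mu/\epsilon}}\le(1-\kappa(\epsilon))^{-1}\|Y_0-\tilde Y_0\|_2$. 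Evaluating the fast integral equation at $t=0$ and applying (\textbf{A2}) once more yields $\|X^\epsilon(0)-\tilde X^\epsilon(0)\|_1\le K(\gamma_1-\mu)^{-1}\|Z^\epsilon-\tilde Z^\epsilon\|_{C^-_{-\mu/\epsilon}}$, from which the bound on $\mathrm{Lip}\,H^\epsilon(\omega,\cdot)$ stated in the theorem follows directly. Measurability of $\omega\mapsto H^\epsilon(\omega,Y_0)$ and invariance of $M^\epsilon(\omega)$ under $\Phi^\epsilon$ follow from measurability of the fixed-point iterates and a time-shift uniqueness argument. The main obstacle is the careful, asymmetric bookkeeping in the two weighted-integral bounds; the smallness of $\epsilon$ enters only to ensure $\epsilon\gamma_2<\mu$ (automatic if $\gamma_2=0$), and once this is arranged the Lyapunov--Perron scheme runs in its standard form.
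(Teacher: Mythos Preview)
Your proposal is correct and follows essentially the same Lyapunov--Perron argument as the paper's own proof: the same operator $\mathcal{J}^\epsilon$ on $C^-_{-\mu/\epsilon}$, the same contraction constant $\kappa(\epsilon)=K/(\gamma_1-\mu)+K\epsilon/(\mu-\epsilon\gamma_2)$, the same Lipschitz bound via $(1-\kappa)^{-1}$ applied to the fast integral at $t=0$, and the same measurability and time-shift invariance arguments. One minor slip: your closing claim that the smallness of $\epsilon$ enters \emph{only} to ensure $\epsilon\gamma_2<\mu$ is not accurate---even with that inequality in hand one still needs $\epsilon$ small enough that $\kappa(\epsilon)<1$, exactly as you (and the paper) argued a few lines earlier.
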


\begin{proof}
We adapt the method of Lyapunov-Perron   to fast-slow random
dynamical systems. To construct an invariant manifold for system
\eqref{Fast-Equation-Random}--\eqref{Slow-Equation-Random} we first
consider integral equations
\begin{equation}\label{Equi-Fast-Slow-Random}
\left(
\begin{array}{ccc}
X^\epsilon(t)\\
\\
Y^\epsilon(t)
\end{array}
\right)= \left(
\begin{array}{ccc}
\frac{1}{\epsilon}\int_{-\infty}^te^{\frac{A(t-s)}{\epsilon}}F(X^\epsilon(s), Y^\epsilon(s), \theta_s^\epsilon\omega)ds\\
\\
e^{Bt}Y_0+\int_0^te^{B(t-s)}G(X^\epsilon(s), Y^\epsilon(s),
\theta_s^\epsilon\omega)ds
\end{array}
\right), \;\; t\leq 0.
\end{equation}
A solution of  \eqref{Equi-Fast-Slow-Random} is denoted by
$Z^\epsilon(t, \omega, Z_0)=\big(X^\epsilon(t, \omega, Y_0),
Y^\epsilon(t, \omega, Y_0)\big)$. Introduce the operators $\mathcal
{J}_1^\epsilon: C_{-\frac{\mu}{\epsilon}}^{-}\mapsto
C_{-\frac{\mu}{\epsilon}}^{1,-}$ and $\mathcal {J}_2^\epsilon:
C_{-\frac{\mu}{\epsilon}}^{-}\mapsto
C_{-\frac{\mu}{\epsilon}}^{2,-}$ by means of

\begin{equation*}
\mathcal
{J}_1^\epsilon(z(\cdot))[t]=\frac{1}{\epsilon}\int_{-\infty}^te^{\frac{A(t-s)}{\epsilon}}F(x(s),
y(s), \theta_s^\epsilon\omega)ds,
\end{equation*}

\begin{equation*}
\mathcal
{J}_2^\epsilon(z(\cdot))[t]=e^{Bt}Y_0+\int_0^te^{B(t-s)}G(x(s),
y(s), \theta_s^\epsilon\omega)ds,
\end{equation*}
for $t\leq 0$ and define the mapping $\mathcal{J}^\epsilon$ by
\begin{equation*}
\mathcal{J}^\epsilon(z(\cdot)):= \left(
\begin{array}{ccc}
\mathcal{J}_1^\epsilon(z(\cdot))\\
\\
\mathcal{J}_2^\epsilon(z(\cdot))
\end{array}
\right).
\end{equation*}
It can be verified that $\mathcal{J}^\epsilon$ maps
$C_{-\frac{\mu}{\epsilon}}^{-}$ into itself. To this end, taking
$z=(x, y)\in C_{-\frac{\mu}{\epsilon}}^{-}$, we have that
\begin{eqnarray}
\nonumber
\|\mathcal{J}_1^\epsilon(z)\|_{{C_{-\frac{\mu}{\epsilon}}^{1,-}}}
&\leq&\frac{K}{\epsilon}\sup\limits_{t\leq
0}\bigg\{e^{\frac{\mu}{\epsilon}t}\int_{-\infty}^te^{\frac{-\gamma_1(t-s)}{\epsilon}}\big(\|x(s)\|_1
+\|y(s)\|_2\big)ds\bigg\}\\
\nonumber &\leq&\frac{K}{\epsilon}\sup\limits_{t\leq 0}\bigg\{\int
_{-\infty}^te^{(\frac{-\gamma_1}{\epsilon}+\frac{\mu}{\epsilon})(t-s)}ds\bigg\}\|z\|_{{C_{-\frac{\mu}{\epsilon}}^{-}}}\\
&=&\frac{K}{\gamma_1-\mu}\|z\|_{C^-_{-\frac{\mu}{\epsilon}}},
\end{eqnarray}
and
\begin{eqnarray}
\nonumber
\|\mathcal{J}_2^\epsilon(z)\|_{{C_{-\frac{\mu}{\epsilon}}^{2, -}}}
&\leq&{K}\sup\limits_{t\leq 0}\bigg\{e^{\frac{\mu}{\epsilon}
t}\int_{t}^0e^{{-\gamma_2(t-s)}}e^{-\frac{\mu}{\epsilon}s}ds
\bigg\}\|z\|_{{C_{-\frac{\mu}{\epsilon}}^{-}}}+\sup\limits_{t\leq
0}\left\{e^{\frac{\mu}{\epsilon}
t}\cdot e^{-\gamma_2t}\|Y_0\|_2\right\}\\
\nonumber &\leq&{K}\sup\limits_{t\leq 0}\bigg\{\int
_{t}^0e^{(-\gamma_2+\frac{\mu}{\epsilon})(t-s)}ds\bigg\}\|z\|_{{C_{-\frac{\mu}{\epsilon}}^{-}}}+\|Y_0\|_2\\
&=&\frac{\epsilon
K}{\mu-\epsilon\gamma_2}\|z\|_{{C_{-\frac{\mu}{\epsilon}}^{-}}}+\|Y_0\|_2.
\end{eqnarray}
Hence,   by definition of  $\mathcal{J}^\epsilon$ we obtain
\begin{equation*}
\|\mathcal{J}^\epsilon(z)\|_{{C_{-\frac{\mu}{\epsilon}}^{-}}}\leq\kappa(K,
\gamma_1, \gamma_2, \mu,
\epsilon)\|z\|_{C^-_{-\frac{\mu}{\epsilon}}}+\|Y_0\|_2
\end{equation*}
with
\begin{eqnarray*}
\nonumber\kappa(K, \gamma_1, \gamma_2, \mu,
\epsilon)&=&\frac{K}{\gamma_1-\mu}+\frac{\epsilon
K}{\mu-\epsilon\gamma_2}.
\end{eqnarray*}
Thus,   we   conclude that $\mathcal{J}^\epsilon$ maps
$C_{-\frac{\mu}{\epsilon}}^{-}$ into itself.

 Next we show that
the mapping $\mathcal{J}^\epsilon$ is contractive. To this end,
taking $z=(x, y), \bar{z}=(\bar{x}, \bar{y}) \in
C_{-\frac{\mu}{\epsilon}}^{-}$, we have that
\begin{eqnarray}
\nonumber
\|\mathcal{J}_1^\epsilon(z)-\mathcal{J}_1^\epsilon(\bar{z})\|_{{C_{-\frac{\mu}{\epsilon}}^{1,-}}}
&\leq&\frac{K}{\epsilon}\sup\limits_{t\leq
0}\bigg\{e^{\frac{\mu}{\epsilon}t}\int_{-\infty}^te^{\frac{-\gamma_1(t-s)}{\epsilon}}\big(\|x(s)-\bar{x}(s)\|_1\\
\nonumber&&+\|y(s)-\bar{y}(s)\|_2\big)ds\bigg\}\\
\nonumber &\leq&\frac{K}{\epsilon}\sup\limits_{t\leq 0}\bigg\{\int
_{-\infty}^te^{(\frac{-\gamma_1}{\epsilon}+\frac{\mu}{\epsilon})(t-s)}ds\bigg\}\|z-\bar{z}\|_{{C_{-\frac{\mu}{\epsilon}}^{-}}}\\
&=&\frac{K}{\gamma_1-\mu}\|z-\bar{z}\|_{C_{-\frac{\mu}{\epsilon}}},\label{Operator-J_1-Contractive}
\end{eqnarray}
and
\begin{eqnarray}
\nonumber
\|\mathcal{J}_2^\epsilon(z)-\mathcal{J}_2^\epsilon(\bar{z})\|_{{C_{-\frac{\mu}{\epsilon}}^{2,
-}}} &\leq&{K}\sup\limits_{t\leq
0}\bigg\{e^{\frac{\mu}{\epsilon} t}\int_{t}^0e^{{-\gamma_2(t-s)}}e^{-\frac{\mu}{\epsilon}s}ds \bigg\}\|z-\bar{z}\|_{{C_{-\frac{\mu}{\epsilon}}^{-}}}\\
\nonumber &\leq&{K}\sup\limits_{t\leq 0}\bigg\{\int
_{t}^0e^{(-\gamma_2+\frac{\mu}{\epsilon})(t-s)}ds\bigg\}\|z-\bar{z}\|_{{C_{-\frac{\mu}{\epsilon}}^{-}}}\\
&=&\frac{\epsilon
K}{\mu-\epsilon\gamma_2}\|z-\bar{z}\|_{{C_{-\frac{\mu}{\epsilon}}^{-}}}.\label{Operator-J_2-Contractive}
\end{eqnarray}
Hence,   by \eqref{Operator-J_1-Contractive} and
\eqref{Operator-J_2-Contractive}
\begin{equation*}
\|\mathcal{J}^\epsilon(z)-\mathcal{J}^\epsilon(\bar{z})\|_{{C_{-\frac{\mu}{\epsilon}}^{-}}}\leq\kappa(K,
\gamma_1, \gamma_2, \mu,
\epsilon)\|z-\bar{z}\|_{C^-_{-\frac{\mu}{\epsilon}}},
\end{equation*}
where
\begin{eqnarray*}
\nonumber\kappa(K, \gamma_1, \gamma_2, \mu,
\epsilon)&=&\frac{K}{\gamma_1-\mu}+\frac{\epsilon
K}{\mu-\epsilon\gamma_2} \rightarrow \frac{K}{\gamma_1-\mu}
\end{eqnarray*}
as $\epsilon\rightarrow0$. Taking into account of
\eqref{Condition-mu} there is a sufficiently small constant
$\epsilon_0>0$ such that
$$\kappa(K,
\gamma_1, \gamma_2, \mu, \epsilon)< 1, \;for\; \epsilon\in (0,
\epsilon_0].$$ Therefore, the mapping $\mathcal {J}^\epsilon$ is
strictly contractive in ${{C_{-\frac{\mu}{\epsilon}}^{-}}}$ , and,
consequently, the integral equation \eqref{Equi-Fast-Slow-Random}
has a unique solution $Z^\epsilon(t, \omega, Y_0)=\big(X^\epsilon(t,
\omega, Y_0), Y^\epsilon(t, \omega, Y_0)\big)$ in
${{C_{-\frac{\mu}{\epsilon}}^{-}}}$. Furthermore one has the
estimate
\begin{equation}\label{Fixedpoint-Lip}
\|Z^\epsilon(\cdot, \omega, Y_1)-Z^\epsilon(\cdot, \omega,
Y_2)\|_{{C_{-\frac{\mu}{\epsilon}}^{-}}}\leq \frac{1}{1-\kappa(K,
\gamma_1, \gamma_2, \mu, \epsilon)}\|Y_1-Y_2\|_2
\end{equation}
for all $\omega\in \Omega, Y_1, Y_2 \in H_2.$\\

Define
\begin{equation}\label{H-epsilon}
H^\epsilon(\omega,
Y_0)=\frac{1}{\epsilon}\int_{-\infty}^0e^{-As/\epsilon}F\big(X^\epsilon(s,
\omega, Y_0), Y^\epsilon(s, \omega, Y_0),
\theta_s^\epsilon\omega\big)ds,
\end{equation}
we then get from \eqref{Fixedpoint-Lip}
\begin{equation*}
\|H^\epsilon(\omega, Y_1)-H^\epsilon(\omega,
Y_2)\|_1\leq\frac{K}{\big(\gamma_1-\mu\big)}\frac{1}{\left[1-\kappa(K,
\gamma_1, \gamma_2, \mu, \epsilon)\right]}\|Y_1-Y_2\|_2
\end{equation*}
for all $Y_1, Y_2 \in H_2, \omega\in \Omega.$ It then follows from
Lemma \ref{If and only if} that
$$M^\epsilon(\omega)=\left\{\big(H^\epsilon(\omega, Y_0), Y_0\big):
Y_0\in H_2\right\}.$$ In order to see that $M^\epsilon(\omega)$ is a
random set we need to show that for any $z=(x, y)\in
\mathbb{H}=H_1\times H_2$,
\begin{equation}\label{inf}
\omega\rightarrow \inf\limits_{z'\in \mathbb{H}}\|(x,
y)-(H^\epsilon(\omega, \mathcal{P}z'), \mathcal{P}z')\|
\end{equation}
is measurable, see Castaing and Valadier \cite{Castaing}, Theorem
III.9. Let $\mathbb{H}_c$ be a countable dense set of the separable
space $\mathbb{H}$. Then the right hand side of \eqref{inf} is equal
to
\begin{equation}
\inf\limits_{z'\in \mathbb{H}_c}\|(x, y)-(H^\epsilon(\omega,
\mathcal{P}z'), \mathcal{P}z')\|
\end{equation}
which follows immediately by the continuity of $H^\epsilon(\omega,
\cdot)$. The measurability of any expression under the infimum of
\eqref{inf} follows since $\omega\rightarrow H^\epsilon(\omega,
\mathcal{P}z')$ is measurable for any $z'\in \mathbb{H}.$

It remains to show that $M^\epsilon(\omega)$ is invariant, i.e., for
each $Z_0=(X_0, Y_0)\in M^\epsilon(\omega)$, $Z^\epsilon(s, \omega,
Z_0)\in M^\epsilon(\theta_s^\epsilon\omega)$ for all $s\geq 0$. We
first note that for each fixed $s\geq 0$, $Z^\epsilon(t+s, \omega,
Z_0)$ is a solution of
\begin{eqnarray*}
&&dX^\epsilon=\frac{1}{\epsilon}A X^\epsilon
dt+\frac{1}{\epsilon}F(X^\epsilon, Y^\epsilon,
\theta_t^\epsilon(\theta_s^\epsilon\omega))dt,  \\
&&dY^\epsilon=BY^\epsilon dt+G(X^\epsilon, Y^\epsilon,
\theta_t^\epsilon(\theta_s^\epsilon\omega))dt,
\end{eqnarray*}
with initial datum $Z(0)=(X(0), Y(0))=Z^\epsilon(s, \omega, Z_0).$
Thus, $Z^\epsilon(t+s, \omega, Z_0)=Z^\epsilon(t,
\theta_s^\epsilon\omega, Z^\epsilon(s, \omega, Z_0)).$ Since
$Z^\epsilon(\cdot, \omega, Z_0)\in C^-_{\frac{-\mu}{\epsilon}},
Z^\epsilon(t, \theta_s^\epsilon\omega, Z^\epsilon(s, \omega,
Z_0))\in C^-_{\frac{-\mu}{\epsilon}}.$  Therefore, $Z^\epsilon(s,
\omega, Z_0)\in M^\epsilon(\theta_s^\epsilon\omega).$ This completes
the proof.
\end{proof}

\bigskip
\begin{remark}
We remark that the key point in the proof of Theorem \ref{Th-RIM} is
that
\begin{eqnarray*}
\nonumber\kappa(K, \gamma_1, \gamma_2, \mu,
\epsilon)&=&\frac{K}{\gamma_1-\mu}+\frac{\epsilon
K}{\mu-\epsilon\gamma_2} < 1.
\end{eqnarray*}
In the particular case  where $\epsilon=1,$ one has
$\kappa=\frac{K}{\gamma_1-\mu}+\frac{ K}{\mu-\gamma_2} < 1$, which
is the usual spectral gap condition. We note also that  the proof is
valid for sufficiently small $\epsilon>0$ only in the case
$\frac{K}{\gamma_1-\mu}<1$. This explains the assumption
\textbf{(A3)}.   It is unclear to us about how to relax this
condition.
\end{remark}

\bigskip
In what follows we prove the exponential tracking property which
means the manifold $M^\epsilon(\omega)$ attracts exponentially all
the orbits of $\Phi^\epsilon$ on condition that the scaling
parameter is sufficiently small. 

\bigskip

\begin{theorem}\label{Th-Exponential-Tracking} (Exponential tracking property)\\
Assume that the assumptions (\textbf{A1})--(\textbf{A3}) hold. Then
for sufficiently small $\epsilon>0$, the Lipschitz invariant
manifold for
\eqref{Fast-Equation-Random}--\eqref{Slow-Equation-Random} obtained
in Theorem \ref{Th-RIM} has the exponential {tracking property} in
the following sense: There exist constants $C_1>0$ and $C_2>0$ such
that for any $Z_0=(X_0, Y_0)\in H$ there is a $\bar{Z}_0=(\bar{X_0},
\bar{Y}_0)\in M^\epsilon(\omega)$ such that
\begin{equation*}
\|\Phi^\epsilon(t, \omega, Z_0)-\Phi^\epsilon(t, \omega,
\bar{Z}_0)\|\leq C_1e^{-C_2t}\|Z_0-\bar{Z}_0\|, \;t\geq0,
\end{equation*}
where $\|\cdot\|$ denotes the norm in space $\mathbb{H}= H_1\times
H_2$ defined by $$\|z\|=\|x\|_1+\|y\|_2, \;\; z=(x, y).$$
\end{theorem}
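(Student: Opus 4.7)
The plan is to run a forward-time Lyapunov--Perron argument parallel to the backward one already used to construct $M^\epsilon$ in Theorem \ref{Th-RIM}. Fix $Z_0=(X_0,Y_0)\in\mathbb{H}$ and pick an auxiliary exponent $\nu\in(0,\mu)$ satisfying $K/(\gamma_1-\nu)<1$; this is possible because \eqref{Condition-mu} gives $\gamma_1-\mu>K$. I will look for the tracking point $\bar Z_0\in M^\epsilon(\omega)$ by producing the difference orbit $U(t):=\Phi^\epsilon(t,\omega,Z_0)-\Phi^\epsilon(t,\omega,\bar Z_0)$ as the unique fixed point of a suitable contraction in the Banach space $C^{+}_{-\nu/\epsilon}$ introduced in Section \ref{Framework}, from which the exponential decay is built in.

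With $Z^\epsilon(\cdot):=\Phi^\epsilon(\cdot,\omega,Z_0)$ treated as known data, I will write $U=(U_1,U_2)$ in variation-of-constants form but with opposite time directions on the two components, so that forward decay is enforced on each. The fast part takes the natural forward representation
\begin{equation*}
U_1(t)=e^{At/\epsilon}U_1(0)+\frac{1}{\epsilon}\int_0^{t}e^{A(t-s)/\epsilon}\bigl[F(Z^\epsilon(s),\theta^\epsilon_s\omega)-F(Z^\epsilon(s)-U(s),\theta^\epsilon_s\omega)\bigr]\,ds,
\end{equation*}
while the slow part is written as a backward-from-$+\infty$ integral
\begin{equation*}
U_2(t)=-\int_t^{\infty}e^{B(t-s)}\bigl[G(Z^\epsilon(s),\theta^\epsilon_s\omega)-G(Z^\epsilon(s)-U(s),\theta^\epsilon_s\omega)\bigr]\,ds,
\end{equation*}
which is the representation in which the group bound $\|e^{B(t-s)}\|\le e^{\gamma_2(s-t)}$ for $s\ge t$ pairs against the exponential weight. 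Evaluating the second line at $t=0$ expresses $\bar Y_0=Y_0-U_2(0)$ as a functional of $U$, and the manifold constraint $\bar X_0=H^\epsilon(\omega,\bar Y_0)$ then forces $U_1(0)=X_0-H^\epsilon(\omega,Y_0-U_2(0))$. Plugging back closes the system into a single self-referential map $\mathcal{T}^\epsilon:C^{+}_{-\nu/\epsilon}\to C^{+}_{-\nu/\epsilon}$.

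The contraction check mirrors the estimates in the proof of Theorem \ref{Th-RIM}. Three contributions appear: (i) the $A$-convolution on $[0,t]$, which contributes $K/(\gamma_1-\nu)$, already strictly less than $1$; (ii) the backward $B$-integral on $[t,\infty)$, which contributes $\epsilon K/(\nu-\epsilon\gamma_2)$; and (iii) the coupling through $U_1(0)=X_0-H^\epsilon(\omega,Y_0-U_2(0))$, absorbed using the Lipschitz bound on $H^\epsilon$ from Theorem \ref{Th-RIM}, which contributes at most $\mathrm{Lip}\,H^\epsilon(\omega,\cdot)\cdot\epsilon K/(\nu-\epsilon\gamma_2)$. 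Pieces (ii) and (iii) are $O(\epsilon)$, so the total contraction constant $\tilde\kappa^\epsilon$ tends to $K/(\gamma_1-\nu)<1$ as $\epsilon\to 0^{+}$, and a unique fixed point $U^{\ast}\in C^{+}_{-\nu/\epsilon}$ exists for all sufficiently small $\epsilon>0$.

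From $U^{\ast}$ I set $\bar Y_0:=Y_0-U^{\ast}_2(0)$, $\bar X_0:=H^\epsilon(\omega,\bar Y_0)$, so that by construction $\bar Z_0\in M^\epsilon(\omega)$, and the definition of the norm on $C^{+}_{-\nu/\epsilon}$ gives the pointwise bound $\|U^{\ast}(t)\|\le e^{-\nu t/\epsilon}\|U^{\ast}\|_{C^{+}_{-\nu/\epsilon}}$; this identifies $C_2=\nu/\epsilon$. To recast the right-hand side as $\|Z_0-\bar Z_0\|$ I will re-examine the two integral equations and bound $\|U^{\ast}\|_{C^{+}_{-\nu/\epsilon}}$ directly in terms of $\|U^{\ast}(0)\|=\|Z_0-\bar Z_0\|$, the nonlinear corrections being absorbed into a prefactor of order $(1-\tilde\kappa^\epsilon)^{-1}$, which becomes $C_1$. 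The main obstacle I expect is exactly the self-referential nature of $\mathcal T^\epsilon$: $U_1(0)$ depends on $U_2(0)$, which itself depends on all of $U$ through the backward integral, and the contraction only closes thanks to the $O(\epsilon)$ smallness of this coupling, so the hypothesis that $\epsilon$ be sufficiently small is truly indispensable and must be tracked carefully through each piece of the estimate.
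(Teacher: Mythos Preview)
Your proposal is correct and follows essentially the same route as the paper: the paper also writes the difference of two orbits via the forward variation-of-constants formula for the fast component and the backward-from-$+\infty$ integral for the slow component, imposes the manifold constraint $U_1(0)=-X_0+H^\epsilon(\omega,V(0)+Y_0)$, and shows the resulting operator on $C^{+}_{-\mu/\epsilon}$ is a contraction with constant tending to $K/(\gamma_1-\mu)$ as $\epsilon\to 0$. The only cosmetic differences are your sign convention on the difference and your introduction of an auxiliary $\nu\in(0,\mu)$, which is unnecessary since $\mu$ itself already satisfies $K/(\gamma_1-\mu)<1$ by \eqref{Condition-mu}; the paper works directly with $\mu$.
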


\begin{proof}
Assume that $Z^\epsilon(t)=( X^\epsilon(t), Y^\epsilon(t))$ and
$\bar{Z}^\epsilon(t)=( \bar{X}^\epsilon(t), \bar{Y}^\epsilon(t))$
are two solutions for
\eqref{Fast-Equation-Random}--\eqref{Slow-Equation-Random}, then
$\mathcal
{Z}^\epsilon(t)=\bar{Z}^\epsilon(t)-Z^\epsilon(t):=(U^\epsilon(t),
V^\epsilon(t))$ satisfies the equations
\begin{eqnarray}
\label{Tracking-Equation-Fast}
&&dU^\epsilon=\frac{1}{\epsilon}AU^\epsilon
dt+\frac{1}{\epsilon}\tilde{F}(U^\epsilon, V^\epsilon,
\theta_t^\epsilon\omega)dt,\\
&&dV^\epsilon=BV^\epsilon dt+\tilde{G}(U^\epsilon, V^\epsilon,
\theta_t^\epsilon\omega)dt,\label{Tracking-Equation-Slow}
\end{eqnarray}
where
\begin{equation*}
\tilde{F}(U^\epsilon, V^\epsilon,
\theta_t^\epsilon\omega)=F(U^\epsilon+X^\epsilon,
V^\epsilon+Y^\epsilon, \theta_t^\epsilon\omega)-F(X^\epsilon,
Y^\epsilon, \theta_t^\epsilon\omega),
\end{equation*}
and
\begin{equation*}
\tilde{G}(U^\epsilon, V^\epsilon,
\theta_t^\epsilon\omega)=G(U^\epsilon+X^\epsilon,
V^\epsilon+Y^\epsilon, \theta_t^\epsilon\omega)-G(X^\epsilon,
Y^\epsilon, \theta_t^\epsilon\omega).
\end{equation*}
First we claim that $\mathcal {Z}^\epsilon(t)=(U^\epsilon(t),
V^\epsilon(t))$ is a solution of
\eqref{Tracking-Equation-Fast}--\eqref{Tracking-Equation-Slow} in
$C_{-\frac{\mu}{\epsilon}}^{+}$ if 
\begin{equation}\label{Tracking-Equi-Fast-Slow-Random}
\left(
\begin{array}{ccc}
U^\epsilon(t)\\
\\
V^\epsilon(t)
\end{array}
\right)= \left(
\begin{array}{ccc}
e^{At/\epsilon}U^\epsilon(0)+\frac{1}{\epsilon}\int_0^te^{A(t-s)/\epsilon}\tilde{F}(U^\epsilon(s), V^\epsilon(s), \theta_s^\epsilon\omega)ds\\
\\
\int_{+\infty}^te^{B(t-s)}\tilde{G}(U^\epsilon(s), V^\epsilon(s),
\theta_s^\epsilon\omega)ds
\end{array}
\right).
\end{equation}
This can be   verified by using the variation of constants formula.
Next we are going to prove that
\eqref{Tracking-Equi-Fast-Slow-Random} has solutions $(U^\epsilon,
V^\epsilon)$ in $C_{-\frac{\mu}{\epsilon}}^+$ with $(U^\epsilon(0),
V^\epsilon(0))=(U(0), V(0))$ and such that  $(\bar{X}_0,
\bar{Y}_0)=(U(0), V(0))+(X_0, Y_0)\in M^\epsilon(\omega)$. Recall
that
\begin{equation*}
(\bar{X}_0, \bar{Y}_0)\in M^\epsilon(\omega)\Longleftrightarrow
\bar{X}_0=\frac{1}{\epsilon}\int_{-\infty}^0e^{A(-s)}F(X^\epsilon(s,
\bar{Y}_0),Y^\epsilon(s,\bar{Y}_0), \theta_s^\epsilon\omega)ds.
\end{equation*}
It follows that
\begin{equation*}
(\bar{X}_0, \bar{Y}_0)=(U(0), V(0))+(X_0, Y_0)\in M^\epsilon(\omega)
\end{equation*}
if and only if
\begin{eqnarray}\nonumber
U(0)&=&-X_0+\frac{1}{\epsilon}\int_{-\infty}^0e^{A(-s)}F(X^\epsilon(s,
V(0)+Y_0), Y^\epsilon(s,V(0)+Y_0), \theta_s^\epsilon\omega)ds\\
&=&-X_0+H^\epsilon(\omega, V(0)+Y_0).\label{Equiv-Invariant}
\end{eqnarray}
For every $\mathcal {Z}=(U, V)\in C_{-\frac{\mu}{\epsilon}}^+$
define for $t\geq 0$
\begin{equation*}
\mathcal {I}_1^\epsilon(\mathcal
{Z}(\cdot))[t]:=e^{At/\epsilon}U(0)+\frac{1}{\epsilon}\int_0^te^{A(t-s)/\epsilon}
\tilde{F}\left(U(s),V(s), \theta_s^\epsilon\omega\right)ds,
\end{equation*}
and
\begin{equation*}
\mathcal {I}_2^\epsilon(\mathcal
{Z}(\cdot))[t]:=\int_{+\infty}^te^{B(t-s)/\epsilon}
\tilde{G}(U(s),V(s), \theta_s^\epsilon\omega)ds,
\end{equation*}
where $U(0)$ is given by \eqref{Equiv-Invariant}. Define the
operator $\mathcal{I}^\epsilon:C_{-\frac{\mu}{\epsilon}}^+\mapsto
C_{-\frac{\mu}{\epsilon}}^+$ by
\begin{equation*}
\mathcal{I}^\epsilon(\mathcal {Z}(\cdot)):= \left(
\begin{array}{ccc}
\mathcal
{I}_1^\epsilon(\mathcal {Z}(\cdot))\\
\mathcal {I}_2^\epsilon(\mathcal {Z}(\cdot))
\end{array}
\right)
\end{equation*}
Assuming that $\mathcal {Z}, \bar{\mathcal {Z}}\in
C_{-\frac{\mu}{\epsilon}}^+$, we obtain from \eqref{Equiv-Invariant}
the estimate
\begin{eqnarray}\nonumber
\|e^{At/\epsilon}(U(0)-\bar{U}(0))\|_1&\leq& e^{-\gamma_1
t/\epsilon} Lip H^{\epsilon}\|V(0)-\bar{V}(0)\|_2\\
\nonumber&\leq&e^{-\gamma_1 t/\epsilon} Lip
H^{\epsilon}\left\|\int_{+\infty}^0e^{B(-s)}\left(\tilde{G}(\mathcal
{Z}(s), \theta_s^\epsilon\omega)-\tilde{G}(\bar{\mathcal {Z}}(s),
\theta_s^\epsilon\omega)\right)ds\right\|_2\\
\nonumber&\leq&e^{-\gamma_1 t/\epsilon} Lip H^{\epsilon}\cdot
K\int_0^{+\infty} e^{\gamma_2s}\|\mathcal {Z}(s)-\bar{\mathcal
{Z}}(s)\|ds,
\end{eqnarray}
and so
\begin{eqnarray}\nonumber
\|\mathcal{I}_1^\epsilon(\mathcal {Z}-\bar{\mathcal {Z}})\|_{C^{+,
1}_{-\frac{\mu}{\epsilon}}}&\leq& LipH^{\epsilon}\cdot K\|\mathcal
{Z}-\bar{\mathcal
{Z}}\|_{C^+_{-\frac{\mu}{\epsilon}}}\sup\limits_{t\geq
0}\left\{e^{-(-\frac{\mu}{\epsilon}+\frac{\gamma_1}{\epsilon})t}\int_0^{+\infty}
e^{\left(\gamma_2-\frac{\mu}{\epsilon}\right)s}ds\right\}\\
\nonumber&&+\frac{K}{\epsilon}\|\mathcal {Z}-\bar{\mathcal
{Z}}\|_{C^+_{-\frac{\mu}{\epsilon}}}\sup\limits_{t\geq
0}\left\{e^{\frac{\mu}{\epsilon}t}\int_0^te^{-\gamma_1(t-s)/\epsilon}e^{-\frac{\mu}{\epsilon}s}ds\right\}\\
\label{I-1-Lip}&\leq& \left(\frac{Lip H^\epsilon\cdot \epsilon
K}{\mu-\epsilon\gamma_2}+\frac{K}{\gamma_1-\mu}\right)\|\mathcal
{Z}-\bar{\mathcal {Z}}\|_{C^+_{-\frac{\mu}{\epsilon}}}.
\end{eqnarray}
 For the operator
$\mathcal{I}_2^\epsilon$ we have
\begin{eqnarray}\nonumber
\|\mathcal{I}_2^\epsilon(\mathcal {Z}-\bar{\mathcal {Z}})\|_{C^{+,
2}_{-\frac{\mu}{\epsilon}}}&\leq&K \|\mathcal {Z}-\bar{\mathcal
{Z}}\|_{C^+_{-\frac{\mu}{\epsilon}}}\sup\limits_{t\geq
0}\left\{e^{-(-\frac{\mu}{\epsilon}+\gamma_2)t}\int_t^{+\infty}
e^{(-\frac{\mu}{\epsilon}+\gamma_2)s}ds\right\}\\
&\leq&\frac{\epsilon K}{\mu-\gamma_2}\|\mathcal {Z}-\bar{\mathcal
{Z}}\|_{C^+_{-\frac{\mu}{\epsilon}}}\label{I-2-Lip}.
\end{eqnarray}
Recalling that $$Lip
H^\epsilon\leq\frac{K}{\left(\gamma_1-\mu\right)\left[1-K\left(\frac{1}{\gamma_1-\mu}
+\frac{\epsilon}{\mu-\epsilon\gamma_2}\right)\right]}$$ and taking
\eqref{I-1-Lip} and \eqref{I-2-Lip} into account, we obtain
\begin{equation*}
\|\mathcal{I}^\epsilon(\mathcal {Z}-\bar{\mathcal
{Z}})\|_{C^+_{-\frac{\mu}{\epsilon}}}\leq \rho(K, \gamma_1,
\gamma_2, \mu, \epsilon)\|\mathcal {Z}-\bar{\mathcal
{Z}}\|_{C^+_{-\frac{\mu}{\epsilon}}}
\end{equation*}
with
\begin{eqnarray*}
\rho(K, \gamma_1, \gamma_2, \mu, \epsilon)&=& \frac{
K}{\gamma_1-\mu}+\frac{\epsilon K}{\mu-\epsilon\gamma_2}
\\ &&+\frac{K^2}{\left(\gamma_1-\mu\right)\left(\frac{\mu}{\epsilon}-\gamma_2\right)
\left[1-K\left(\frac{1}{\gamma_1-\mu}+\frac{\epsilon}{\mu-\epsilon\gamma_2}\right)\right]}\\
&\rightarrow& \frac{K}{\gamma_1-\mu}
\end{eqnarray*}
as $\epsilon \rightarrow 0$. By \eqref{Condition-mu} there is a
sufficiently small constant $\epsilon'_0>0$ such that $\rho(K,
\gamma_1, \gamma_2, \mu, \epsilon)<1$ for all
$0<\epsilon<\epsilon'_0$. Therefore, the operator $\mathcal
{I}^\epsilon$ is strictly contractive and has a unique fixed point
$\mathcal {Z}\in C_{-\frac{\mu}{\epsilon}}^+$ which is the unique
solution for \eqref{Tracking-Equi-Fast-Slow-Random} and satisfies
$(\bar{X}_0, \bar{Y}_0)=(U(0), V(0))+(X_0, Y_0)\in
M^\epsilon(\omega)$. Moreover, we have
\begin{equation*}
\|\mathcal {Z}\|_{C_{-\frac{\mu}{\epsilon}}^+}\leq
\frac{1}{1-\left(\frac{K}{\gamma_1-\mu}+\frac{\epsilon
K}{\mu-\epsilon\gamma_2}\right)}\|\mathcal {Z}(0)\|
\end{equation*}
which means
\begin{equation*}
\|\Phi^\epsilon(t, \omega, Z_0)-\Phi^\epsilon(t, \omega,
\bar{Z}_0)\|\leq\frac{e^{-\frac{\mu}{\epsilon}
t}}{1-\left(\frac{K}{\gamma_1-\mu}+\frac{\epsilon
K}{\mu-\epsilon\gamma_2}\right)}\|Z_0-\bar{Z}_0\| ,\; t>0.
\end{equation*}
Therefore, the exponential tracking property of $M^\epsilon(\omega)$
is obtained.
\end{proof}

\medskip

\begin{remark}\label{Remark-1}
By the relationship between solutions of system
\eqref{Fast-Equation-Stoch}--\eqref{Slow-Equation-Stoch} and
\eqref{Fast-Equation-Random}--\eqref{Slow-Equation-Random}, the
original fast-slow stochastic system also has a Lipschitz random
invariant manifold under the conditions of Theorem \ref{Th-RIM},
which is represented as
\begin{eqnarray*}
\mathcal{M}^\epsilon(\omega)&=&M^\epsilon(\omega)+(\eta^{\frac{1}{\epsilon}}(\omega),
0)\\
&=&\big\{\left(h^\epsilon(\omega, Y_0), Y_0\right): Y_0\in H_2\big\}
\end{eqnarray*}
with $$h^\epsilon(\omega, Y_0)=H^\epsilon(\omega,
Y_0)+\eta^{\frac{1}{\epsilon}}(\omega).$$ Hence, if system
\eqref{Fast-Equation-Random}--\eqref{Slow-Equation-Random} has an
exponential tracking manifold so has system
\eqref{Fast-Equation-Stoch}--\eqref{Slow-Equation-Stoch}.
\end{remark}

\medskip

\begin{remark}
Theorem \ref{Th-Exponential-Tracking} implies that   any orbit of
the fast--slow system tends exponentially to an orbit on the
manifold $M^\epsilon(\omega)$ which is governed by an evolutionary
equation with usual time scale. To be more specific, i.e., for any
solution $Z^\epsilon=(X^\epsilon, Y^\epsilon)$ for
\eqref{Fast-Equation-Random}--\eqref{Slow-Equation-Random}, there is
an orbit $\tilde{Z}^\epsilon(t, \omega)=(\tilde{X}^\epsilon(t,
\omega), \tilde{Y}^\epsilon(t, \omega))$ on the manifold
$M^\epsilon$ which satisfies the evolutionary equation
\begin{equation*}
\dot{\tilde{Y}}^\epsilon=B\tilde{Y}^\epsilon+G\left(H^\epsilon(
\theta_t^\epsilon\omega, \tilde{Y}^\epsilon), \tilde{Y}^\epsilon,
\theta_t^\epsilon\omega\right)
\end{equation*}
such that
\begin{equation*}
\|Z^\epsilon(t, \omega)-\tilde{Z}^\epsilon(t, \omega
)\|\leq\frac{e^{-\frac{\mu}{\epsilon}t}}{1-\left(\frac{K}{\gamma_1-\mu}+\frac{\epsilon
K}{\mu-\epsilon\gamma_2}\right)}\|Z_0-\tilde{Z}_0\| ,\; t>0,
\end{equation*}
where $Z_0=(X^\epsilon(0), Y^\epsilon(0))$ and $\tilde{Z}_0=(\tilde{
X}(0), \tilde{Y}(0))$.
\end{remark}
Applying the ideas from the Remark \ref{Remark-1} we have
  a reduction system which describes the long-time behavior for
system \eqref{Fast-Equation-Stoch}--\eqref{Slow-Equation-Stoch}.

\medskip

\begin{theorem}\label{Reduction-Theo} (Reduction system)\\
Assume that $\epsilon>0$ is sufficiently small and the assumption
(\textbf{A1})--(\textbf{A3}) hold. Then for any solution
$z^\epsilon(t)=(x^\epsilon(t), y^\epsilon(t))$ with initial data
$z^\epsilon(0)=(x_0, y_0)$ to system
\eqref{Fast-Equation-Stoch}--\eqref{Slow-Equation-Stoch}, there
exists a solution $\tilde{z}^\epsilon(t)=(\tilde{x}^\epsilon(t),
\tilde{y}^\epsilon(t))$ with $\tilde{z}(0)=(\tilde{x}^\epsilon(0),
\tilde{y}^\epsilon(0))=(\tilde{x}_0, \tilde{y}_0) $ to the reduced
system
\begin{equation*}
\begin{cases}
\dot{\tilde{y}}^\epsilon=B\tilde{y}^\epsilon+g\left(\tilde{x},
\tilde{y}^\epsilon\right),\\
\tilde{x}=h^\epsilon( \theta_t^\epsilon\omega, \tilde{y}^\epsilon),
\end{cases}
\end{equation*}
such that for any $t\geq0$ and almost sure $\omega\in \Omega,$
\begin{eqnarray*}
\|z^\epsilon(t, \omega)-\tilde{z}^\epsilon(t, \omega
)\|&\leq&\frac{e^{-\frac{\mu}{\epsilon}t}}{1-\left(\frac{K}{\gamma_1-\mu}+\frac{\epsilon K}{\mu-\epsilon\gamma_2}\right)}\|z_0-\tilde{z}_0\|\\
&\leq& C_{K, \gamma_1, \mu}e^{\frac{-\mu
t}{\epsilon}}\|z_0-\tilde{z}_0\|
\end{eqnarray*}
with $\frac{-\mu}{\epsilon}<0$  and $C_{K, \gamma_1, \mu}$ being a
constant depending on $K, \gamma_1$ and $\mu$.
\end{theorem}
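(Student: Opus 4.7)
The plan is to deduce this reduction statement by combining the exponential tracking property (Theorem \ref{Th-Exponential-Tracking}) with the conjugacy between the original stochastic system \eqref{Fast-Equation-Stoch}--\eqref{Slow-Equation-Stoch} and the random system \eqref{Fast-Equation-Random}--\eqref{Slow-Equation-Random}, as described in Remark \ref{Remark-1}. The main work is bookkeeping, not new analysis, so the proof should be short.

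First I would pass from $z^\epsilon(t) = (x^\epsilon(t), y^\epsilon(t))$ to the corresponding orbit $Z^\epsilon(t) = (X^\epsilon(t), Y^\epsilon(t))$ of the random system via $X^\epsilon = x^\epsilon - \eta^{1/\epsilon}(\theta_t\omega)$, $Y^\epsilon = y^\epsilon$. By Theorem \ref{Th-Exponential-Tracking}, for $\epsilon>0$ sufficiently small there exists $\bar{Z}_0 \in M^\epsilon(\omega)$ with $\|\bar{Z}_0 - Z_0\|$ controlled, and the orbit $\tilde{Z}^\epsilon(t, \omega) := \Phi^\epsilon(t,\omega,\bar{Z}_0)$ remains in $M^\epsilon(\theta_t^\epsilon\omega)$ for all $t\geq 0$ by the invariance established in Theorem \ref{Th-RIM}. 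Invariance together with the graph representation $M^\epsilon(\omega) = \{(H^\epsilon(\omega, Y_0), Y_0)\}$ forces the slow component $\tilde{Y}^\epsilon$ of this tracking orbit to satisfy
\begin{equation*}
\dot{\tilde{Y}}^\epsilon = B\tilde{Y}^\epsilon + G\bigl(H^\epsilon(\theta_t^\epsilon\omega, \tilde{Y}^\epsilon), \tilde{Y}^\epsilon, \theta_t^\epsilon\omega\bigr),
\end{equation*}
while $\tilde{X}^\epsilon(t,\omega) = H^\epsilon(\theta_t^\epsilon\omega, \tilde{Y}^\epsilon(t,\omega))$.

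Next I would transform back. Setting $\tilde{y}^\epsilon = \tilde{Y}^\epsilon$ and $\tilde{x}^\epsilon = \tilde{X}^\epsilon + \eta^{1/\epsilon}(\theta_t\omega)$, we get $\tilde{x}^\epsilon = H^\epsilon(\theta_t^\epsilon\omega, \tilde{y}^\epsilon) + \eta^{1/\epsilon}(\theta_t\omega) = h^\epsilon(\theta_t^\epsilon\omega, \tilde{y}^\epsilon)$ by the definition of $h^\epsilon$ in Remark \ref{Remark-1}. Substituting into the evolution equation for $\tilde{Y}^\epsilon$ and using $G(X,Y,\theta_t^\epsilon\omega) = g(X + \eta^{1/\epsilon}(\theta_t\omega), Y)$, the equation for $\tilde{y}^\epsilon$ collapses to $\dot{\tilde{y}}^\epsilon = B\tilde{y}^\epsilon + g(\tilde{x}, \tilde{y}^\epsilon)$ with the algebraic constraint $\tilde{x} = h^\epsilon(\theta_t^\epsilon\omega, \tilde{y}^\epsilon)$, which is precisely the reduced system in the statement. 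Setting $\tilde{z}_0 = \bar{Z}_0 + (\eta^{1/\epsilon}(\omega), 0) \in \mathcal{M}^\epsilon(\omega)$ produces the initial datum.

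Finally I would derive the error estimate. Since the $\eta^{1/\epsilon}(\theta_t\omega)$ shift cancels in the difference $z^\epsilon - \tilde{z}^\epsilon = Z^\epsilon - \tilde{Z}^\epsilon$, the first inequality follows directly from Theorem \ref{Th-Exponential-Tracking}:
\begin{equation*}
\|z^\epsilon(t,\omega) - \tilde{z}^\epsilon(t,\omega)\| \leq \frac{e^{-\mu t/\epsilon}}{1 - \bigl(\tfrac{K}{\gamma_1-\mu} + \tfrac{\epsilon K}{\mu - \epsilon\gamma_2}\bigr)}\|z_0 - \tilde{z}_0\|.
\end{equation*}
For the second inequality I would observe that as $\epsilon \to 0$ the denominator converges to $1 - K/(\gamma_1-\mu) > 0$ by \eqref{Condition-mu}, so for all $\epsilon$ sufficiently small it is bounded below by a positive constant depending only on $K$, $\gamma_1$, $\mu$. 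The constant $C_{K,\gamma_1,\mu}$ can therefore be taken as (twice) the reciprocal of this limiting value. No step is particularly delicate; the only thing to be careful about is verifying that the algebraic identification $\tilde{x} = h^\epsilon(\theta_t^\epsilon\omega,\tilde{y}^\epsilon)$ correctly propagates through time, which is an immediate consequence of the invariance of $\mathcal{M}^\epsilon$ under the RDS $\phi^\epsilon$ inherited from the invariance of $M^\epsilon$ under $\Phi^\epsilon$.
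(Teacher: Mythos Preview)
Your proposal is correct and follows exactly the approach the paper intends: the paper does not give a separate proof of this theorem but simply states that it follows by ``applying the ideas from Remark~\ref{Remark-1}'' together with the exponential tracking property of Theorem~\ref{Th-Exponential-Tracking}, and your write-up fills in precisely those details (the conjugacy $X^\epsilon = x^\epsilon - \eta^{1/\epsilon}$, the invariance of $M^\epsilon$, the identification $h^\epsilon = H^\epsilon + \eta^{1/\epsilon}$, and the cancellation of the shift in the error estimate).
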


\section{Critical manifolds}

Now we consider an asymptotic approximation for the slow manifold
$M^\epsilon(\omega)$, as $\epsilon \to 0$.

The scaling $t\rightarrow \epsilon t$ in system
\eqref{Fast-Equation-Random}-\eqref{Slow-Equation-Random} yields
\begin{eqnarray}\label{Scaling-Fast-Equation-Random}
&&dX^\epsilon=A X^\epsilon dt+F(X^\epsilon, Y^\epsilon,
\theta_{\epsilon t}^\epsilon\omega)dt,\\
\label{Scaling-Slow-Equation-Random}&&dY^\epsilon=\epsilon
BY^\epsilon dt+\epsilon G(X^\epsilon, Y^\epsilon, \theta_{\epsilon
t}^\epsilon\omega)dt,
\end{eqnarray}
where
\begin{eqnarray*}
&&F(X^\epsilon, Y^\epsilon, \theta_{\epsilon
t}^\epsilon\omega)=f(X^\epsilon+\eta^\frac{1}{\epsilon}(\theta_{\epsilon
t}\omega),
Y^\epsilon),\\
&&G(X^\epsilon, Y^\epsilon, \theta_{\epsilon
t}^\epsilon\omega)=g(X^\epsilon+\eta^\frac{1}{\epsilon}(\theta_{\epsilon
t}\omega), Y^\epsilon).
\end{eqnarray*}
We now replace $\eta^\frac{1}{\epsilon}(\theta_{\epsilon t}\omega)$
by $\xi(\theta_t\omega)$ that has the same distribution, then we
have a random evolutionary system, whose solution's distribution
coincides with that of the system
\eqref{Scaling-Fast-Equation-Random}-\eqref{Scaling-Slow-Equation-Random},
in the form of
\begin{eqnarray}
&&d\breve{X}^\epsilon=A \breve{X}^\epsilon
dt+f(\breve{X}^\epsilon+\xi(\theta_t\omega), \breve{Y}^\epsilon)dt,\label{tilde-fast}\\
&&d\breve{Y}^\epsilon=\epsilon B \breve{Y}^\epsilon dt+\epsilon
g(\breve{X}^\epsilon+\xi(\theta_t\omega),
\breve{Y}^\epsilon)dt.\label{tilde-slow}
\end{eqnarray}
By proceeding as in the proof of Theorem \ref{Th-RIM}, it can be
shown that the system \eqref{tilde-fast}-\eqref{tilde-slow} has a
random slow manifold represented as
\begin{equation*}
\breve{M}^\epsilon(\omega)=\left\{\big(\breve{H}^\epsilon(\omega,
Y_0), Y_0\big): Y_0\in H_2\right\}
\end{equation*}
with
\begin{equation*}
\breve{H}^\epsilon(\omega,
Y_0)=\int_{-\infty}^0e^{-As}f(\breve{X}^\epsilon(s, \omega, Y_0
)+\xi(\theta_t\omega), \breve{Y}^\epsilon(s, \omega, Y_0))ds,
\end{equation*}
where
\begin{eqnarray*}
&&\breve{X}^\epsilon(t, \omega,
Y_0)=\int_{-\infty}^te^{A(t-s)}f(\breve{X}^\epsilon(s, \omega,
Y_0)+\xi(\theta_s\omega), \breve{Y}^\epsilon(s, \omega,
Y_0))ds,\;t\leq 0,\\
&&\breve{Y}^\epsilon(t, \omega, Y_0)=e^{Bt\epsilon
}Y_0+\epsilon\int_0^te^{B(t-s)\epsilon}g(\breve{X}^\epsilon(s,
\omega, Y_0)+\xi(\theta_s\omega), \breve{Y}^\epsilon(s, \omega,
Y_0))ds,\;t\leq 0,
\end{eqnarray*}
is the unique solution in $C_{-\mu}^-$  for the above integral
equations. With a change of variables $s/\epsilon\rightarrow t$ in
\eqref{H-epsilon}, we have
\begin{eqnarray*}
H^\epsilon(\omega,
Y_0)&=&\int_{-\infty}^0e^{-As}f\big(X^\epsilon(s\epsilon, \omega,
Y_0)+\eta^{\frac{1}{\epsilon}}(\theta_{\epsilon t}\omega),
Y^\epsilon(s\epsilon, \omega, Y_0)\big)ds\\
&\backsimeq&\breve{H}^\epsilon(\omega, Y_0),
\end{eqnarray*}
where $\backsimeq$ denotes equivalence (coincidence) in
distribution. Therefore, the invariant manifold
$\breve{M}^\epsilon(\omega)$ is a version in distribution for
${M}^\epsilon(\omega)$.

Next, we  show that there exists a random invariant manifold
$M^0(\omega)$, which is called a \emph{random critical manifold} for
system \eqref{tilde-fast}-\eqref{tilde-slow}, will be the asymptotic
limit of the manifold $\breve{M}^\epsilon(\omega)$ as
$\epsilon\rightarrow 0$. To this end, we consider the following
system
\begin{eqnarray}
&&d\bar{X}=A\bar{X}dt+f(\bar{X}+\xi(\theta_t\omega),\bar{Y})dt,\label{epsilon=0-1}\\
&&d\bar{Y}=0.\label{epsilon=0-2}
\end{eqnarray}
By the same discussion in Theorem \ref{Th-RIM}, system
\eqref{epsilon=0-1}-\eqref{epsilon=0-2} has a random invariant
manifold with representation
\begin{equation} \label{M000}
\bar{M}^0(\omega)=\left\{\big(\bar{H}^0(\omega, Y_0), Y_0\big):
Y_0\in H_2\right\},
\end{equation}
where $$\bar{H}^0(\omega,
Y_0)=\int_{-\infty}^0e^{-As}f\big(\bar{X}(s, \omega,
Y_0)+\xi(\theta_s\omega), Y_0\big)ds,$$ and $\bar{X}(t, \omega,
Y_0)$ is the unique solution in $C_{-\mu}^{1, -}$ for integral
equation
\begin{equation*}
\bar{X}(t, \omega, Y_0)=\int_{-\infty}^te^{A(t-s)}f\big(\bar{X}(s,
\omega, Y_0)+\xi(\theta_s\omega), Y_0\big)ds,\;t\leq 0.
\end{equation*}

\medskip

The main result of this section is the following theorem.

\begin{theorem}\label{Approximation thoerem} (Critical manifolds)\\
Let the assumptions (\textbf{A1})--(\textbf{A3}) hold and also
assume that there exists a positive number $C_g$ such that
$\sup\limits_{x\in H_1, y\in H_2}\|g(x, y)\|_{H_2}=C_g$. Then the
invariant manifold $\breve{M}^\epsilon(\omega)$ for the system
\eqref{Scaling-Fast-Equation-Random}-\eqref{Scaling-Slow-Equation-Random}
can be approximated by a critical manifold $\bar{M}^0(\omega)$
defined in \eqref{M000}, in the sense that their respective graph
mappings $\breve{H}^\epsilon  $ and $ \bar{H}^0 $ satisfy
\begin{equation*}
\|\breve{H}^\epsilon(\omega, Y_0)-\bar{H}^0(\omega,
Y_0)\|_1=\mathcal{O}(\epsilon),
\end{equation*}
or
\begin{equation*}
\breve{H}^\epsilon(\omega, Y_0)=\bar{H}^0(\omega, Y_0) +
\mathcal{O}(\epsilon),
\end{equation*}
for all $Y_0\in \mathcal{D}(B)$, a.s. $\;\omega\in \Omega$ and as
$\;\epsilon\rightarrow 0$.
\end{theorem}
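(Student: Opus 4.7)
The plan is to estimate $\|\breve{H}^\epsilon(\omega,Y_0)-\bar{H}^0(\omega,Y_0)\|_1$ directly from the integral representations of the two graph maps, reducing the problem to weighted‑norm estimates for the fast discrepancy $\alpha(t):=\|\breve{X}^\epsilon(t)-\bar{X}(t)\|_1$ and the slow deviation $\beta(t):=\|\breve{Y}^\epsilon(t)-Y_0\|_2$. All the smallness in $\epsilon$ must ultimately come from the factor $\epsilon$ sitting in front of $B$ and $g$ in the slow equation \eqref{tilde-slow}, which keeps $\breve{Y}^\epsilon$ close to $Y_0$ in a quantitative way; the fast equation then inherits this smallness through a Gronwall-type estimate identical in form to the contraction bound already used in the proof of Theorem \ref{Th-RIM}.

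First I subtract the two defining integrals to obtain
$$
\breve{H}^\epsilon-\bar{H}^0=\int_{-\infty}^0 e^{-As}\bigl[f(\breve{X}^\epsilon(s)+\xi,\breve{Y}^\epsilon(s))-f(\bar{X}(s)+\xi,Y_0)\bigr]\,ds.
$$
Hypothesis (\textbf{A1}) gives $\|e^{-As}\|\leq e^{\gamma_1 s}$ on $s\leq 0$, and Lipschitz continuity (\textbf{A2}) reduces everything to
$$
\|\breve{H}^\epsilon-\bar{H}^0\|_1\leq K\int_{-\infty}^0 e^{\gamma_1 s}\bigl(\alpha(s)+\beta(s)\bigr)\,ds,
$$
so it suffices to bound $\|\alpha\|_{C^{1,-}_{-\mu}}$ and $\|\beta\|_{C^{2,-}_{-\mu}}$ by $\mathcal{O}(\epsilon)$.

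The heart of the argument is the bound on $\beta$. Using variation of constants and $Y_0\in\mathcal{D}(B)$,
$$
\breve{Y}^\epsilon(s)-Y_0=\int_0^{\epsilon s}e^{Bu}BY_0\,du+\epsilon\int_0^s e^{B(s-r)\epsilon}g\bigl(\breve{X}^\epsilon(r)+\xi,\breve{Y}^\epsilon(r)\bigr)\,dr,
$$
and (\textbf{A1}) together with $\|g\|_2\leq C_g$ yields
$$
\beta(s)\leq \tfrac{\|BY_0\|_2+C_g}{\gamma_2}\bigl(e^{-\gamma_2\epsilon s}-1\bigr),\qquad s\leq 0.
$$
Applying the elementary inequality $e^{x}-1\leq xe^{x}$ with $x=-\gamma_2\epsilon s\geq 0$ extracts an explicit factor of $\epsilon$ before taking the supremum, giving
$$
\|\beta\|_{C^{2,-}_{-\mu}}\leq \frac{(\|BY_0\|_2+C_g)\,\epsilon}{e\,(\mu-\epsilon\gamma_2)}=\mathcal{O}(\epsilon),
$$
valid whenever $\mu>\epsilon\gamma_2$, i.e.\ for all sufficiently small $\epsilon$. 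Next, subtracting the integral equations for $\breve{X}^\epsilon$ and $\bar{X}$ and running exactly the same weighted-sup computation as for $\mathcal{J}_1^\epsilon$ in the proof of Theorem \ref{Th-RIM} (but now in $C_{-\mu}^{1,-}$ instead of $C_{-\mu/\epsilon}^{1,-}$) gives
$$
\alpha(t)\leq K\int_{-\infty}^t e^{-\gamma_1(t-s)}\bigl(\alpha(s)+\beta(s)\bigr)\,ds,\qquad \|\alpha\|_{C^{1,-}_{-\mu}}\leq \frac{K}{\gamma_1-\mu-K}\,\|\beta\|_{C^{2,-}_{-\mu}}=\mathcal{O}(\epsilon),
$$
where \eqref{Condition-mu} ensures the denominator is strictly positive. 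Splitting $e^{\gamma_1 s}=e^{(\gamma_1-\mu)s}e^{\mu s}$ in the outer integral then closes the proof:
$$
\|\breve{H}^\epsilon-\bar{H}^0\|_1\leq \frac{K}{\gamma_1-\mu}\bigl(\|\alpha\|_{C^{1,-}_{-\mu}}+\|\beta\|_{C^{2,-}_{-\mu}}\bigr)=\mathcal{O}(\epsilon).
$$

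The main obstacle is exactly the weighted-norm bound on $\beta$: the raw pointwise estimate $\beta(s)\leq C(e^{-\gamma_2\epsilon s}-1)$ tends to zero as $\epsilon\to 0$ for each fixed $s$ but grows in $|s|$ as $s\to-\infty$, so one must trade this growth against the weight $e^{\mu s}$ while simultaneously extracting a clean power of $\epsilon$. This is precisely where the two extra hypotheses of the theorem enter in an essential way: the regularity $Y_0\in\mathcal{D}(B)$ is needed to bound $(e^{B\epsilon s}-I)Y_0$ linearly in $\epsilon s$ rather than just by a constant, and the global boundedness of $g$ is needed so that the Duhamel term in the slow equation does not produce additional feedback in $Y$ that would have to be absorbed into the Gronwall step. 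Weakening either hypothesis seems to destroy the uniform rate $\mathcal{O}(\epsilon)$ in the weighted sup‑norm.
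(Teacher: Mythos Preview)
Your proof is correct and follows essentially the same three-step architecture as the paper's: bound the slow deviation $\beta(s)=\|\breve{Y}^\epsilon(s)-Y_0\|_2$ pointwise by $C(e^{-\gamma_2\epsilon s}-1)$, feed this into the integral inequality for the fast discrepancy $\alpha$ to obtain $\|\alpha\|_{C^{1,-}_{-\mu}}=\mathcal{O}(\epsilon)$, and then close the estimate on $\breve{H}^\epsilon-\bar{H}^0$. The only difference is cosmetic: the paper carries the raw pointwise bound on $\beta$ through the $\alpha$-integral and then maximizes the resulting auxiliary function $\mathscr{S}(t,\epsilon)=e^{\mu t}\bigl(\tfrac{1}{\gamma_1-\epsilon\gamma_2}e^{-\epsilon\gamma_2 t}-\tfrac{1}{\gamma_1}\bigr)$ by explicit calculus, whereas you use $e^{x}-1\leq xe^{x}$ to convert to the weighted norm $\|\beta\|_{C^{2,-}_{-\mu}}$ up front---a slight streamlining, but not a genuinely different idea.
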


\begin{proof}
In this proof, the letter $C$   with or without subscripts   denotes
positive constants whose value may change in different occasions. We
will write the dependence of constant on parameters explicitly if it
is essential. As is known \cite{Pazy}, if $Y_0\in \mathcal {D}(B),
t\leq 0$,
\begin{eqnarray}
\|e^{Bt\epsilon}Y_0-Y_0\|_2&=&\|\int_{\epsilon
t}^0e^{B\tau}BY_0d\tau\|_2\nonumber\\
&\leq&\|BY_0\|_2\int_{\epsilon t}^0e^{-\gamma_2\tau}d\tau\nonumber\\
&=&\|BY_0\|_2\frac{1}{\gamma_2}(e^{-\gamma_2\epsilon
t}-1).\label{Semigroup-Property}
\end{eqnarray}
Then we have, for all $t\leq 0$,
\begin{eqnarray}
\|\breve{Y}^\epsilon(t, \omega, Y_0)-Y_0\|_2&\leq&\|e^{B\epsilon
t}Y_0-Y_0\|_2\nonumber\\
&&+\epsilon\|\int_t^0e^{B\epsilon(t-s)}g\big(\breve{X}^\epsilon(s,
\omega, Y_0)+\xi(\theta_t\omega), \breve{Y}^\epsilon(s, \omega,
Y_0)\big)ds\|_2\nonumber\\
&\leq&\|BY_0\|_2\frac{1}{\gamma_2}(e^{-\gamma_2\epsilon
t}-1)+\epsilon C_g\int_t^0e^{-\epsilon\gamma_2(t-s)}ds\nonumber\\
&=&C(e^{-\gamma_2\epsilon t}-1).\label{Difference-Y-small}
\end{eqnarray}
Then, by using again \eqref{Semigroup-Property}, we have
\begin{eqnarray*}
\|\breve{X}^\epsilon(t, \omega, Y_0)-\bar{X}(t, \omega,
Y_0)\|_1&\leq&K\int_{-\infty}^te^{-\gamma_1(t-s)}\|\breve{X}^\epsilon(s,
\omega, Y_0)-\bar{X}(s, \omega, Y_0)\|_1ds\\
&&+KC\int_{-\infty}^te^{-\gamma_1(t-s)}(e^{-\gamma_2\epsilon
t}-1)ds\\
&=&K\int_{-\infty}^te^{-\gamma_1(t-s)}\|\breve{X}^\epsilon(s,
\omega,
Y_0)-\bar{X}(s, \omega, Y_0)\|_1ds\\
&&+C(\frac{1}{\gamma_1-\epsilon\gamma_2}e^{-\epsilon\gamma_2t}-\frac{1}{\gamma_1}),
\end{eqnarray*}
which implies
\begin{eqnarray}
\|\breve{X}^\epsilon(\cdot, \omega, Y_0)-\bar{X}(\cdot, \omega,
Y_0)\|_{C_{-\mu}^{1, -}}&\leq&K\|\breve{X}^\epsilon(\cdot, \omega,
Y_0)-\bar{X}(\cdot, \omega, Y_0)\|_{C_{-\mu}^{1,
-}}\nonumber\\
&&\cdot\sup\limits_{t\leq0}\int_{-\infty}^te^{-(\gamma_1-\mu)(t-s)}ds\nonumber\\
&&+C\sup\limits_{t\leq 0}\big\{e^{\mu
t}(\frac{1}{\gamma_1-\epsilon\gamma_2}e^{-\epsilon\gamma_2t}-\frac{1}{\gamma_1})\big\}\nonumber\\
&=&\frac{K}{\gamma_1-\mu}\|\breve{X}^\epsilon(\cdot, \omega,
Y_0)-\bar{X}(\cdot, \omega, Y_0)\|_{C_{-\mu}^{1,
-}}+C\sup\limits_{t\leq 0}\mathscr{S}(t,
\epsilon),\nonumber\\
\label{Difference-X}
\end{eqnarray}
where
\begin{equation*}
\mathscr{S}(t, \epsilon)=e^{\mu
t}(\frac{1}{\gamma_1-\epsilon\gamma_2}e^{-\epsilon\gamma_2t}-\frac{1}{\gamma_1}),\;
t\in (-\infty, 0].
\end{equation*}
Since
\begin{eqnarray}
\frac{d\mathscr{S}(t,\epsilon)}{dt}&=& e^{\mu
t}\big(\frac{\mu-\epsilon\gamma_2}{\gamma_1-\epsilon\gamma_2}
e^{-\epsilon\gamma_2 t}-\frac{\mu}{\gamma_1}\big)\nonumber,\;
t\in(-\infty, 0),
\end{eqnarray}
we have, for sufficiently small $\epsilon>0$, that
\begin{eqnarray}
\sup\limits_{t\leq 0}\mathscr{S}(t,
\epsilon)&=&\mathscr{S}\left(-\frac{1}{\epsilon
\gamma_2}\ln\frac{\mu(\gamma_1-\epsilon\gamma_2)}{\gamma_1(\mu-\epsilon\gamma_2)},
\epsilon\right)\nonumber\\
&=&\left(\frac{\mu}{\gamma_1(\mu-\epsilon\gamma_2)}-\frac{1}{\gamma_1}\right)\cdot
\left[\frac{\mu(\gamma_1-\epsilon\gamma_2)}{\gamma_1(\mu-\epsilon\gamma_2)}
\right]^{-\frac{\mu}{\epsilon\gamma_2}}\nonumber\\
&\leq&\frac{\mu}{\gamma_1(\mu-\epsilon\gamma_2)}-\frac{1}{\gamma_1}\label{S-decrease}
\end{eqnarray}
Now, according to \eqref{Difference-X} and \eqref{S-decrease}, we
have
\begin{eqnarray*}
\|\breve{X}^\epsilon(\cdot, \omega, Y_0)-\bar{X}(\cdot, \omega,
Y_0)\|_{C_{-\mu}^{1,
-}}&\leq&\frac{K}{\gamma_1-\mu}\|\breve{X}^\epsilon(\cdot, \omega,
Y_0)-\bar{X}(\cdot, \omega, Y_0)\|_{C_{-\mu}^{1,
-}}\nonumber\\
&&+C(\frac{\mu}{\gamma_1(\mu-\epsilon\gamma_2)}-\frac{1}{\gamma_1}).
\end{eqnarray*}
By \eqref{Condition-mu},
\begin{eqnarray}\label{Difference-X-small}
\|\breve{X}^\epsilon(\cdot, \omega, Y_0)-\bar{X}(\cdot, \omega,
Y_0)\|_{C_{-\mu}^{1, -}}\leq
C(\frac{\mu}{\gamma_1(\mu-\epsilon\gamma_2)}-\frac{1}{\gamma_1}).
\end{eqnarray}
Hence, thanks to \eqref{Difference-Y-small} and
\eqref{Difference-X-small}, {we deduce}
\begin{eqnarray*}
\|\breve{H}^\epsilon(\omega, Y_0)-\bar{H}(\omega, Y_0)\|_1&\leq&
K\int_{-\infty}^0e^{\gamma_1s}\|\breve{X}^\epsilon(s, \omega,
Y_0)-\bar{X}(s, \omega, Y_0)\|_1ds\\
&&+K\int_{-\infty}^0e^{\gamma_1s}\|\breve{Y}^\epsilon(s, \omega,
Y_0)-Y_0\|_2ds\\
&\leq&C(\frac{\mu}{\gamma_1(\mu-\epsilon\gamma_2)}-\frac{1}{\gamma_1})\int_{-\infty}^0e^{(\gamma_1-\mu)s}ds\\
&&+C\int_{-\infty}^0e^{\gamma_1s}(e^{-\gamma_2\epsilon s}-1)ds\\
&=&C(\frac{\mu}{\gamma_1(\mu-\epsilon\gamma_2)}-\frac{1}{\gamma_1})+C(\frac{1}{\gamma_1-\epsilon\gamma_2}-\frac{1}{\gamma_1})\nonumber
\\&=&\mathcal {O}(\epsilon).
\end{eqnarray*}
This completes the proof.
\end{proof}

\medskip

\begin{remark}
Consider the case that $H_2$ is a finite dimensional space, the
operator $B$ is a constant matrix and
\eqref{tilde-fast}--\eqref{tilde-slow} is a coupled system of an
evolutionary equation and ordinary differential equations.
 This system arises from biology, such as Hodgkin-Huxley systems \emph{(}see Example
\ref{Example-2}\emph{)}. Then the above theorem implies  that for
any bounded set $E\subset H_2$,
\begin{equation*}
\sup\limits_{Y_0\in E}\|\breve{H}^\epsilon(\omega,
Y_0)-\bar{H}(\omega, Y_0)\|_1=\mathcal {O}(\epsilon), \; a.s.  \;
\omega\in \Omega\; as\; \epsilon  \rightarrow  0.
\end{equation*}
\end{remark}

\section{Illustrative examples}

Let us look at several examples to illustrate the
  results in the previous two sections.

\begin{example}\label{Example-1}
Let $D\subset \mathbb{R}^3$ be a bounded domain with smooth boundary
$\partial D $.   Consider a coupled  system of stochastic
  parabolic-hyperbolic partial differential equations \emph{(}see, e.g.,
  \emph{\cite{Caraballo}} and \emph{\cite{Chueshov-2}}  \emph{)}
\begin{eqnarray}
&&u_t= \frac{1}{\epsilon}( \Delta u-\alpha u)+\frac{1}{\epsilon}f(u,
v,
v_t)+\frac{1}{\sqrt{\epsilon}}\dot{w}(t), \label{Example-Fast}\\
&&u=0 \;\;on\;\;\partial D,\label{Example-Fast-Boundary}\\
&&v_{tt}=\Delta v-\beta v+g(u, v, v_t),\label{Example-Slow}\\
&&v=0 \;\;on\;\;\partial D,\label{Example-Slow-Boundary}
\end{eqnarray}
where $\Delta$ denotes the Laplace operator and the parameters
$\alpha, \beta$ are positive. The interaction functions
$$f: \mathbb{R}^3\mapsto \mathbb{R} \;\;and \;\; g: \mathbb{R}^3\mapsto
\mathbb{R}$$ are assumed to be Lipschitz continuous with a Lipschitz
constant $K>0$. Thus the assumption (\textbf{A2}) holds. Such a
system may describe a thermoelastic wave propagation in a random
medium \cite{Chow1973}. The wave profile $v$ in an interacting
random thermoelastic medium is
  described by a hyperbolic partial differential equation. If
the  wave   is temperature dependent and the heat conductivity has
faster evolution, then the hyperbolic equation is coupled to a
stochastic parabolic (heat) equation with different characteristic
timescales.

We introduce the usual Hilbert space $L^2(D)$ as well as the Sobolev
spaces ${H}^2(D)$ and ${H}_0^1(D)$.
 Take $H_1=L^2(D)$. Let $A=\Delta -\alpha I_{id}$
with domain $\mathcal {D}(A)={H}^2\cap{H}_0^1.$ By the semigroup
theory the operator $A$ generates a contraction semigroup
$\big\{e^{At}: t\geq 0\big\}$ in $H_1$  \emph{(\cite{Pazy})} which
satisfies $\|e^{At}\|_{H_1} \leq e^{-\gamma_1 t}, t\geq 0$ with
$\gamma_1=\alpha.$ Let $B=\Delta-\beta I_{id}$ with domain $D(B)=
{H}^2\cap{H}_0^1.$ Define
\begin{equation*}
z:=\left(
\begin{array}{cc}
v\\
v'
\end{array}
\right), \mathcal{B}:=\left(
\begin{array}{cc}
0 \;\;\;I_{id}\\
B \;\;\;0
\end{array}
\right)
\end{equation*}
and $H_2=H_0^1(D) \times L^2(D)$ with the energy norm
\begin{equation*}
\|z\|_{H_2}=\big\{\|v\|_{H_0^1}^2+\|v'\|_{L^2}^2\big\}^\frac{1}{2},
\end{equation*}
where $\|\cdot\|_{H_0^1}$ and $\|\cdot\|_{L^2}$ denote the norm in
$H^1_0$ and $L^2$, respectively. Let $\mathcal {D}(\mathcal
{B})=\mathcal {D}(B)\times H^1.$ It is known that $\mathcal {B}$
generates a  unitary group \emph{(\cite{Yosida})} in $H_2$ which
satisfies $\|e^{\mathcal{B}t}\|_{H_2}\leq e^{-\gamma_2 t}$ for $t\in
\mathbb{R}$ with $\gamma_2=0.$ Then the system
\eqref{Example-Fast}--\eqref{Example-Slow-Boundary} can be rewritten
as
\begin{eqnarray*}
&&u_t=\frac{1}{\epsilon}Au+f(u,
z)+\frac{1}{\sqrt{\epsilon}}\dot{w}_t,\\
&&z_t=\mathcal{B}z+G(u, z),
\end{eqnarray*}
with $$G(u, z)=\big(0, g(u, z)\big),$$ which is in the standard form
of \eqref{Fast-Equation-Stoch}--\eqref{Slow-Equation-Stoch}. Thus
under the condition
$$K< \gamma_1$$
and the scaling parameter $\epsilon$ small enough the random
dynamical system generated by
\eqref{Example-Fast}--\eqref{Example-Slow-Boundary} has an invariant
manifold $\mathcal {M}^\epsilon(\omega)=\{(h^\epsilon(\omega, Y_0),
Y_0 )\: Y_0\in  H_2 \}$,   possessing the exponential tracking
property by Theorem \ref{Th-Exponential-Tracking}. Moreover, by
Theorem \ref{Reduction-Theo}, the reduction system for long-time
behavior to system
\eqref{Example-Fast}-\eqref{Example-Slow-Boundary} is
\begin{equation*}
\begin{cases}
\dot{\tilde{y}}^\epsilon=\mathcal
{B}\tilde{y}^\epsilon+G\left(\tilde{x},
\tilde{y}^\epsilon\right),\\
\tilde{x}=h^\epsilon( \theta_t^\epsilon\omega, \tilde{y}^\epsilon).
\end{cases}
\end{equation*}
Note that a similar result for this example has also been obtained
in \emph{\cite{Caraballo}}.
\end{example}

\bigskip

\begin{example}\label{Example-2}
Let $D\subset \mathbb{R}^n$ be a bounded domain with smooth boundary
$\partial D $. Consider a system of coupled parabolic partial
differential equations and ordinary differential equations
\begin{eqnarray}
&&u_t=\frac{1}{\epsilon}\Delta u+\frac{1}{\epsilon}f(u,
v)+\frac{1}{\sqrt{\epsilon}}\dot{w}_t,\label{Example-2-fast}\\
&&u=0\; on\; \partial D,\\
&&v_t=g(u, v)\label{Example-2-slow},
\end{eqnarray}
where $f:\mathbb{R}^{1+m}\mapsto \mathbb{R}$, $g:
\mathbb{R}^{1+m}\mapsto \mathbb{R}^m$ are Lipschitz maps with a
Lipschitz constant $K>0$:
\begin{eqnarray*}
&&|f(x_1, y_1)-f(x_2, y_2)|\leq
K(|x_1-x_2|+|y_1-y_2|_{\mathbb{R}^m}),\\
&&|g(x_1, y_1)-g(x_2, y_2)|_{\mathbb{R}^m}\leq
K(|x_1-x_2|+|y_1-y_2|_{\mathbb{R}^m}),
\end{eqnarray*}
for all $(x, y)\in \mathbb{R}\times\mathbb{R}^m$.  Thus the
assumption (\textbf{A2}) holds.  This system may model certain
biological processes. For instance, the famous FitzHugh-Nagumo
system \cite{Fitzhugh, Nagumo}, as a simplified version of the
Hodgkin-Huxley model \cite{Cronin},  which describes   mechanisms of
a neural excitability and excitation   for macro-receptors, belongs
to this class.

As in Example \ref{Example-1} the differential operator $A=\Delta$
with domain $\mathcal {D}(A)={H}^2\cap{H}_0^1$ generates a
$C_0$-semigroup $\{e^{At}: t\geq 0\}$ on $H_1=L^2(D)$ which
satisfies $\|e^{At}\|_{H_1}\leq e^{-\gamma_1t}$ with $\gamma_1=\inf
spec\{A\}>0$. Let $B\equiv 0$ in $H_2 =[L^2(D)]^m$. It is clear that
$e^{Bt}=I_{id}$ for all $t\in \mathbb{R}$ and $\|e^{Bt}\|_{H_2}\leq
e^{-\gamma_2t}$ with $\gamma_2=0$. Therefore, the system
\eqref{Example-2-fast}--\eqref{Example-2-slow}   has a random
invariant manifold $\mathcal
{M}^\epsilon(\omega)=\{(h^\epsilon(\omega, Y_0), Y_0 )\: Y_0\in
H_2\}$ with an exponential tracking property if
 $K<\gamma_1$ and $\epsilon>0$ is small enough. We also have the
 reduction system
\begin{equation*}
\begin{cases}
\dot{\tilde{y}}^\epsilon=g\left(\tilde{x},
\tilde{y}^\epsilon\right),\\
\tilde{x}=h^\epsilon( \theta_t^\epsilon\omega, \tilde{y}^\epsilon),
\end{cases}
\end{equation*}
for the long time behavior of the original system
\eqref{Example-2-fast}-\eqref{Example-2-slow}.

\end{example}

\bigskip
\begin{example}
Consider the following system of two coupled wave equations (i.e.,
hyperbolic partial differential equations) on a bounded spatial
interval $I=[0, \pi]:$
\begin{eqnarray}
&&u_{tt}=\frac{1}{\epsilon}(\Delta u-\nu u_t)+\frac{1}{\epsilon}f(u,
v,
v_t)+\frac{1}{\sqrt{\epsilon}}\dot{w}(t),\label{Example-3-fast}\\
&&u=0\;\; on\; \partial I,\label{Example-3-Fast-Boundary}\\
&&v_{tt}=\Delta v-\beta v+g(u, v, v_t),\label{Example-3-slow}\\
&&v=0 \;\;on\;\;\partial I,\label{Example-3-Slow-Boundary}
\end{eqnarray}
where $\Delta$ denotes the Laplace operator and the parameters
$\beta, \nu$ are positive. The interaction functions
$$f: \mathbb{R}^3\mapsto \mathbb{R} \;\; and \;\; g: \mathbb{R}^3\mapsto
\mathbb{R}$$ are Lipschitz continuous with a Lipschitz constant
$K>0$. Thus the assumption (\textbf{A2}) holds. This system models,
for example, vibrating strings connected in parallel with zero
boundary conditions \cite{Na} and multi-component wave fields such
as electromagnetic waves in plasmas, elastic waves in solids, light
waves in anisotropic and inhomogeneous media \cite{Littlejohn}.

Rewrite the equations
\eqref{Example-3-fast}--\eqref{Example-3-Fast-Boundary} as
\begin{eqnarray*}
\frac{d U}{dt}=\frac{1}{\epsilon}\mathcal {A}^\epsilon
U+\frac{1}{\epsilon}F(U, V)+\frac{1}{\sqrt{\epsilon}}\dot{W}(t),
\end{eqnarray*}
where \begin{eqnarray*} &&\mathcal
{A}^\epsilon=\left(\begin{array}{cc}
0 \;\;\;\epsilon I_{id}\\
\Delta \;\;\;-\nu
\end{array}
\right), F(U, V)=\left(\begin{array}{cc}
0\\
f(u, v, v')
\end{array}\right), \; \dot{W}(t)=\left(\begin{array}{cc}0\\
\dot{w}{(t)}\end{array}\right),
\end{eqnarray*}
and
\begin{equation*}
U=(u, u'), V=(v, v')\in H_0^1(0, \pi)\times L^2(0, \pi).
\end{equation*}
The linear operator $\mathcal {A}^\epsilon$ has the eigenvalues
\begin{equation*}
\lambda_k^{\pm}=\frac{\nu\pm\sqrt{\nu^2-4k^2\epsilon}}{2}, \; k=1,
2,...
\end{equation*}
with the corresponding eigenvectors
\begin{equation*}
e_k^\pm =\left(\begin{array}{cc} \sin kx\\
\lambda_k^\pm\sin kx\end{array}\right).
\end{equation*}
It is clear   that the operator $\mathcal {A}^\epsilon$ generates a
$C_0-$semigroup $e^{\mathcal {A}^\epsilon t}$ on Hilbert space
$H_1:=H_0^1(0, \pi)\times L^2(0, \pi)$ equipped with energy norm
introduced in Example \ref{Example-1}, and it satisfies
\begin{equation*}
\|e^{\mathcal{A}^\epsilon t}\|_{H_1}\leq e^{-\nu t}, \; t\geq 0.
\end{equation*}
In the same way as in Example \ref{Example-1} the linear part of the
equation \eqref{Example-3-slow}--\eqref{Example-3-Slow-Boundary}
generates a unitary $C_0-$semigroup on Hilbert space $H_2=H_0^1(0,
\pi)\times L^2(0, \pi)$. Thus under the condition that $K< \nu$, the
system \eqref{Example-3-fast}--\eqref{Example-3-Slow-Boundary} has
an exponentially tracking random invariant manifold $\mathcal
{M}^\epsilon(\omega)=\{(h^\epsilon(\omega, Y_0), Y_0 )\: Y_0\in H_2
\}$ when $\epsilon>0$ is sufficiently small. In particular, by
Theorem \ref{Reduction-Theo} the system
\eqref{Example-3-fast}-\eqref{Example-3-Slow-Boundary} has a
reduction equation
\begin{equation*}
\begin{cases}
\dot{\tilde{y}}^\epsilon=\mathcal
{B}\tilde{y}^\epsilon+G\left(\tilde{x},
\tilde{y}^\epsilon\right),\\
\tilde{x}=h^\epsilon( \theta_t^\epsilon\omega, \tilde{y}^\epsilon),
\end{cases}
\end{equation*}
where $\mathcal {B}$ and $G$ are defined as in Example
\ref{Example-1}.
\end{example}

\medskip
\section{Remarks on the case of local Lipschitz nonlinearity}

We have limited ourselves to the case where the
 nonlinearities are globally Lipschitz continuous.  We remark that when
the nonlinearities in those three examples in Section 6 are only
locally Lipschitz (say, near the origin $(0, 0)$), the above
discussions remain valid locally. To this end we state the
definition of a local random invariant manifold  \cite{Blomker,
Chen}.

\medskip

\begin{definition}
We say that the random dynamical system $\phi(t, \omega)$ has a
local random invariant manifold (LRIM) with radius $R$, if there is
a random set $\mathcal {M}^R(\omega)$, which is defined by the graph
of a  random continuous function $\psi(\omega, \cdot):
\overline{B_R(0)}\bigcap H_2\rightarrow H_1$, such that for all
bounded sets $B$ in $B_R(0)\subset H_2$ we have
$$\phi(t,\omega)[\mathcal {M}^R(\omega)\bigcap B]\subset\mathcal {M}^R(\theta_t\omega)$$
for all $t\in(0, \tau_0(\omega))$ with
$$\tau_0(\omega)=\tau_0(\omega, B)=\inf\{t\geq 0: \phi(t,\omega)[\mathcal {M}^R(\omega)\bigcap
B]\subset\!\!\!\!\!\!\!/B_R(0)\}.$$
\end{definition}
Let $\chi: H_1\times H_2\rightarrow \mathbb{R}$ be a bounded smooth
function such that
$$ \chi(v_1, v_2)=
\begin{cases}1,\quad \ \ &
  \mbox{if } \|v_1\|_1+\|v_2\|_2 \leq 1, \\
  0, \quad \ \ & \mbox{if }\|v_1\|_1+\|v_2\|_2\geq 2.
\end{cases}
$$
For any positive parameter $R$, we define $\chi_R(v_1,
v_2)=\chi(\frac{v_1}{R}, \frac{v_2}{R})$ for all $(v_1, v_2)\in
H_1\times H_2.$ Let $f^{(R)}(x, y):=\chi_R(x, y)f(x, y), g^{(R)}(x,
y):=\chi_R(x, y)g(x, y).$ For every $R>0$, there must exist a
positive  $K_R$ such that
\begin{equation*}
\|f^{(R)}(x_1, y_1)-f^{(R)}(x_2, y_2)\|_1\leq
K_R(\|x_1-x_2\|_1+\|y_1-y_2\|_2),
\end{equation*}
and
\begin{equation*}
\|g^{(R)}(x_1, y_1)-g^{(R)}(x_2, y_2)\|_2\leq
K_R(\|x_1-x_2\|_1+\|y_1-y_2\|_2).
\end{equation*}
Then the cut-off system of
\eqref{Fast-Equation-Random}-\eqref{Slow-Equation-Random} is as
follows:
\begin{eqnarray}\label{Fast-Equation-Random-1}
&&dX^\epsilon=\frac{1}{\epsilon}A X^\epsilon
dt+\frac{1}{\epsilon}F^{(R)}(X^\epsilon, Y^\epsilon,
\theta_t^\epsilon\omega)dt,\\
\label{Slow-Equation-Random-1}&&dY^\epsilon=BY^\epsilon
dt+G^{(R)}(X^\epsilon, Y^\epsilon, \theta_t^\epsilon\omega)dt,
\end{eqnarray}where
\begin{eqnarray*}
&&F^{(R)}(X^\epsilon, Y^\epsilon,
\theta_t^\epsilon\omega)=f^{(R)}(X^\epsilon+\eta^\frac{1}{\epsilon}(\theta_t\omega),
Y^\epsilon),\\
&&G^{(R)}(X^\epsilon, Y^\epsilon,
\theta_t^\epsilon\omega)=g^{(R)}(X^\epsilon+\eta^\frac{1}{\epsilon}(\theta_t\omega),
Y^\epsilon).
\end{eqnarray*}
 The
system
\eqref{Fast-Equation-Random-1}--\eqref{Slow-Equation-Random-1} has a
unique solution   and thus the solution mapping generates a
continuous random dynamical system $\Phi^\epsilon_R$. If
$K_R<\gamma_1$, then the cut-off system
\eqref{Fast-Equation-Random-1}--\eqref{Slow-Equation-Random-1}
admits a globally invariant manifold $\mathcal {M}^{\epsilon}_R$
possessing exponentially tracking property. Now as $\Phi^\epsilon$
and $\Phi^\epsilon_R$ agree on $B_R(0)$, we conclude that
$\widetilde{\mathcal {M}}_R^\epsilon =\mathcal {M}_R^\epsilon
\bigcap B_R(0)$ defines a local invariant manifold of the original
system \eqref{Fast-Equation-Random}-\eqref{Slow-Equation-Random}.


\medskip

{\bf Acknowledgement.} We would like to thank Bjorn Schmalfuss for
pointing out this problem to us and thank Wei Wang, Guanggan Chen
and Jicheng Liu for helpful discussions and comments.
\medskip

\end{document}